\newcommand{\klockan}{\the\hours:{\ifnum\minutes<10 0\fi}\the\minutes}
\newcommand{\tid}{\today\ \klockan}
\newcommand{\prtid}{\smash{\raise 10mm \hbox{\LaTeX ed \tid}}}
\renewcommand{\prtid}{}
\def\sectionmark#1{} 
\def\subsectionmark#1{}
\newcommand{\sectnr}{\ifnum \c@secnumdepth >\z@
                 \thesection.\hskip 1em\relax \fi}
\def\@evenhead{\footnotesize\rm\thepage\hfil\leftmark\hfil\llap{\prtid}}
\def\@oddhead{\footnotesize\rm\rlap{\prtid}\hfil\rightmark\hfil\thepage}
\def\tableofcontents{\section*{Contents} 
 \@starttoc{toc}}
\def\@biblabel#1{#1.}
\let\Thebibliography=\thebibliography
\renewcommand{\thebibliography}[1]{\def\@mkboth##1##2{}\Thebibliography{#1}
\addcontentsline{toc}{section}{References}
\frenchspacing 
\setlength{\@topsep}{0pt}
\setlength{\itemsep}{0pt}%
\setlength{\parskip}{0pt plus 2pt}%
}
\def\ldots{\mathinner.\nonscript\!.%
 \ifx\next,.\else\ifx\next;.\else\ifx\next..\else
 \nonscript\!\mathinner.\fi\fi\fi}
\let\Enumerate=\enumerate
\renewcommand{\enumerate}{\Enumerate%
\setlength{\@topsep}{0pt}
\setlength{\itemsep}{0pt}%
\setlength{\parskip}{0pt plus 1pt}%
\renewcommand{\theenumi}{\textup{(\alph{enumi})}}%
\renewcommand{\labelenumi}{\theenumi}%
}
\let\endEnumerate=\endenumerate
\renewcommand{\endenumerate}{\endEnumerate\unskip}
\def\@seccntformat#1{\csname the#1\endcsname.\quad}
\long\def\@makecaption#1#2{%
  \vskip\abovecaptionskip
  \sbox\@tempboxa{ #1. #2}%
  \ifdim \wd\@tempboxa >\hsize
    #1. #2\par
  \else
    \global \@minipagefalse
    \hb@xt@\hsize{\hfil\box\@tempboxa\hfil}%
  \fi
  \vskip\belowcaptionskip}
\newcommand{\authortitle}[2]{\author{#1}\title{#2}\markboth{#1}{#2}}
\newcommand{\art}[6]{{\sc #1, \rm #2, \it #3\/ \bf #4 \rm (#5), \mbox{#6}.}}
\newcommand{\auth}[2]{{#1,  #2.}}
\def\idxauth{\auth}
\newcommand{\artprep}[3]{{\sc #1, \rm #2, #3.}}
\newcommand{\book}[3]{{\sc #1, \it #2, \rm #3.}}
\newcommand{\AND}{{\rm and }}
\newcommand{\artnopt}[6]{{\sc #1, \rm #2, \it #3\/ \bf #4 \rm (#5), \mbox{#6}}}
\newtheoremstyle{descriptive}%
  {\topsep}   
  {\topsep}   
  {\rmfamily} 
  {}          
  {\bfseries} 
  {.}         
  { }         
  {}          
\newtheoremstyle{propositional}%
  {\topsep}   
  {\topsep}   
  {\itshape}  
  {}          
  {\bfseries} 
  {.}         
  { }         
  {}          
\theoremstyle{propositional}
\newtheorem{thm}{Theorem}[section]
\newtheorem{prop}[thm]{Proposition}
\newtheorem{lem}[thm]{Lemma}
\theoremstyle{descriptive}
\newtheorem{deff}[thm]{Definition}
\newtheorem{example}[thm]{Example}
\newtheorem{remark}[thm]{Remark}
\renewenvironment{proof}[1][\proofname]{\par
  \pushQED{\qed}%
  \normalfont
  \trivlist
  \item[\hskip\labelsep
        \itshape
    #1\@addpunct{.}]\ignorespaces
}{%
  \popQED\endtrivlist\@endpefalse
}
\newcommand{\setm}{\setminus}
\newcommand{\Cp}{{C_p}}
\DeclareMathOperator{\diam}{diam}
\DeclareMathOperator{\Div}{div}
\DeclareMathOperator{\dist}{dist}
\newcommand{\bdy}{\partial}
{\catcode`p =12 \catcode`t =12 \gdef\eeaa#1pt{#1}}      
\def\accentadjtext#1{\setbox0\hbox{$#1$}\kern   
                \expandafter\eeaa\the\fontdimen1\textfont1 \ht0 }
\def\accentadjscript#1{\setbox0\hbox{$#1$}\kern 
                \expandafter\eeaa\the\fontdimen1\scriptfont1 \ht0 }
\def\accentadjscriptscript#1{\setbox0\hbox{$#1$}\kern   
                \expandafter\eeaa\the\fontdimen1\scriptscriptfont1 \ht0 }
\def\accentadjtextback#1{\setbox0\hbox{$#1$}\kern       
                -\expandafter\eeaa\the\fontdimen1\textfont1 \ht0 }
\def\accentadjscriptback#1{\setbox0\hbox{$#1$}\kern     
                -\expandafter\eeaa\the\fontdimen1\scriptfont1 \ht0 }
\def\accentadjscriptscriptback#1{\setbox0\hbox{$#1$}\kern 
                -\expandafter\eeaa\the\fontdimen1\scriptscriptfont1 \ht0 }
\def\itoverline#1{{\mathsurround0pt\mathchoice
        {\rlap{$\accentadjtext{\displaystyle #1}
                \accentadjtext{\vrule height1.593pt}
                \overline{\phantom{\displaystyle #1}
                \accentadjtextback{\displaystyle #1}}$}{#1}}
        {\rlap{$\accentadjtext{\textstyle #1}
                \accentadjtext{\vrule height1.593pt}
                \overline{\phantom{\textstyle #1}
                \accentadjtextback{\textstyle #1}}$}{#1}}
        {\rlap{$\accentadjscript{\scriptstyle #1}
                \accentadjscript{\vrule height1.593pt}
                \overline{\phantom{\scriptstyle #1}
                \accentadjscriptback{\scriptstyle #1}}$}{#1}}
        {\rlap{$\accentadjscriptscript{\scriptscriptstyle #1}
                \accentadjscriptscript{\vrule height1.593pt}
                \overline{\phantom{\scriptscriptstyle #1}
                \accentadjscriptscriptback{\scriptscriptstyle #1}}$}{#1}}}}
\def\itunderline#1{{\mathsurround0pt\mathchoice
        {\rlap{$\underline{\phantom{\displaystyle #1}
                \accentadjtextback{\displaystyle #1}}$}{#1}}
        {\rlap{$\underline{\phantom{\textstyle #1}
                \accentadjtextback{\textstyle #1}}$}{#1}}
        {\rlap{$\underline{\phantom{\scriptstyle #1}
                \accentadjscriptback{\scriptstyle #1}}$}{#1}}
        {\rlap{$\underline{\phantom{\scriptscriptstyle #1}
                \accentadjscriptscriptback{\scriptscriptstyle #1}}$}{#1}}}}
\newcommand{\alp}{\alpha}
\newcommand{\ga}{\gamma}
\newcommand{\eps}{\varepsilon}
\newcommand{\Om}{\Omega}
\newcommand{\clcombm}{{\overline{\comb}\mspace{1mu}}^M}
\newcommand{\combG}{{\comb^{G}}}
\renewcommand{\phi}{\varphi}
\newcommand{\phit}{\widetilde{\varphi}}
\newcommand{\p}{{$p\mspace{1mu}$}}
\newcommand{\R}{\mathbf{R}}
\newcommand{\eR}{{\overline{\R}}}
\newcommand{\ud}{\mathbf{D}}
\newcommand{\limplus}{{\mathchoice{\raise.17ex\hbox{$\scriptstyle +$}}
                {\raise.17ex\hbox{$\scriptstyle +$}}
                {\raise.1ex\hbox{$\scriptscriptstyle +$}}
                {\scriptscriptstyle +}}}
\newcommand{\uHp}{\itoverline{P}}   
\newcommand{\lHp}{\itunderline{P}}  
\newcommand{\Hp}{P}                 
\newcommand{\Hpind}[1]{P_{#1}}      
\newcommand{\uHpind}[1]{\itoverline{P}_{#1}}      
\newcommand{\lHpind}[1]{\itunderline{P}_{#1}}      
\newcommand{\EHp}{P^{\rm Ext}}      
\newcommand{\EuHp}{\itoverline{P}^{\rm Ext}}      
\newcommand{\ElHp}{\itunderline{P}^{\rm Ext}}      
\newcommand{\SHp}{S}      
\newcommand{\SuHp}{\itoverline{S}}      
\newcommand{\SlHp}{\itunderline{S}}      
\newcommand{\wt}{\widetilde{w}}
\newcommand{\psit}{\tilde{\psi}}
\newcommand{\vt}{\tilde{v}}
\newcommand{\eith}{e^{i\theta}}
\newcommand{\reith}{r\eith}
\newcommand{\dM}{d_M}
\newcommand{\Ext}{{\rm Ext}}
\newcommand{\bdyExt}{\bdy_{\rm Ext}}
\newcommand{\Extbdy}{\bdyExt}
\newcommand{\combExt}{\comb^{\rm Ext}}
\newcommand{\bdyM}{\partial_{M}}
\newcommand{\bdyP}{\partial_{P}}
\newcommand{\clOmP}{{\overline{\Om}\mspace{1mu}}^P}
\newcommand{\UU}{\mathcal{U}}%
\newcommand{\UUt}{\widetilde{\mathcal{U}}}%
\newcommand{\Cbdd}{C_{\rm bdd}}
\newcommand{\Cunif}{C_{\rm unif}}
\newcommand{\ktilde}{\tilde{k}}
\newcommand{\kt}{\tilde{k}}
\newcommand{\bCp}{{\protect\itoverline{C}_p}}
\newcommand{\comb}{\Psi}
\newcommand{\cprime}{$'$}
\newcommand{\Et}{\widetilde{E}}
\newcommand{\Omt}{\widetilde{\Om}}
\numberwithin{equation}{section}
\newenvironment{ack}{\medskip{\it Acknowledgement.}}{}
\begin{document}

\authortitle{Anders Bj\"orn}
        {The Dirichlet problem for \p-harmonic functions on the 
topologist's comb}
\author{
Anders Bj\"orn \\
\it\small Department of Mathematics, Link\"opings universitet, \\
\it\small SE-581 83 Link\"oping, Sweden\/{\rm ;}
\it \small anders.bjorn@liu.se
}

\date{}
\maketitle

\noindent{\small
{\bf Abstract}.
In this paper we study the Perron method for solving
the \p-harmonic Dirichlet problem on the topologist's comb.
For functions which are bounded and continuous 
at the accessible points, 
we obtain invariance of the Perron solutions under arbitrary
perturbations on the set of inaccessible points.
We also obtain some results allowing for jumps and perturbations at a
countable set of points.
}

\bigskip

\noindent
{\small \emph{Key words and phrases}:
Boundary regularity,
 Dirichlet problem,
invariance,  jump,
nonlinear potential theory,
Perron method, perturbation, \p-harmonic function,
prime end boundary,
resolutive, topologist's comb.
}

\medskip

\noindent
{\small Mathematics Subject Classification (2010):
Primary: 31C45; Secondary: 35J66.
}

\section{Introduction}

In the Dirichlet problem one looks for a \p-harmonic function $u$
on some bounded domain $\Om \subset \R^n$ which takes prescribed boundary values $f$.
A \emph{\p-harmonic function} $u$ is a continuous weak solution  of the
equation
\[
      \Div(|\nabla u|^{p-2}\nabla u)=0.
\]
(And thus for $p=2$ we obtain the usual harmonic functions.)
Here $1<p<\infty$ is fixed. 
The nonlinear potential theory associated with
\p-harmonic functions has been studied for half a century,
first on $\R^n$ and then in various other situations (manifolds,
Heisenberg groups, graphs etc.),
and more recently on metric spaces
giving 
a unified treatment covering most of the earlier cases,
see the monographs
Heinonen--Kilpel\"ainen--Martio~\cite{HeKiMa} (for weighted $\R^n$)
and Bj\"orn--Bj\"orn~\cite{BBbook} (for metric spaces)
and the references therein.

If $f$ is not continuous, then there usually is no \p-harmonic
function $u$ which takes the boundary values as limits
(i.e.\ such that $\lim_{y \to x} u(y)=f(x)$ for all $x \in \partial \Om$),
and even for continuous $f$ and with $p=2$ this is not always possible.
One therefore needs some other precise definition of what
is a \emph{solution} to the Dirichlet problem.
For \p-harmonic functions there are at least four different
definitions in the literature, of which the \emph{Perron method} is the most general,
see the definitions in 
Bj\"orn--Bj\"orn--Shanmugalingam~\cite{BBS}, \cite{BBS2} and
Bj\"orn--Bj\"orn~\cite{BB2} as well as Theorem~4.2 in~\cite{BB2},
or the discussion in the introduction to Chapter~10
in Bj\"orn--Bj\"orn~\cite{BBbook}.

For any boundary function $f: \partial \Om \to \eR:=[-\infty,\infty]$,
the Perron method produces an upper and a lower Perron solution.
When these coincide they give a reasonable solution to the Dirichlet problem,
called the \emph{Perron solution} $\Hp f$, 
and $f$ is said to be \emph{resolutive},
see Section~\ref{sect-Perron} for the precise definition.

In this paper we want to study the Dirichlet problem,
or more precisely  Perron solutions, 
for \p-harmonic functions 
on the toplogist's comb  
\[
    \comb= ((-1,1)\times(0,2)) \setm \bigcup_{j=0}^\infty \itoverline{I}_j
\]
in the plane,
where     $I_j= (0,1) \times \{2^{-j}\}$, $j=0,1, \ldots$,
see Figure~\ref{fig1}.
Let $I= (0,1]  \times \{0\}$ be the set of inaccessible boundary points of $\comb$.

\begin{figure}[t]
\begin{center}
\setlength{\unitlength}{2000sp}%
\begingroup\makeatletter\ifx\SetFigFont\undefined%
\gdef\SetFigFont#1#2#3#4#5{%
  \reset@font\fontsize{#1}{#2pt}%
  \fontfamily{#3}\fontseries{#4}\fontshape{#5}%
  \selectfont}%
\fi\endgroup%
\begin{picture}(4834,5176)(2379,-5525)
\thinlines
{\color[rgb]{0,0,0}\put(4801,-2761){\line( 1, 0){2400}}
}%
{\color[rgb]{0,0,0}\put(2401,-5161){\framebox(4800,4800){}}
}%
{\color[rgb]{0,0,0}\put(4801,-3961){\line( 1, 0){2400}}
}%
{\color[rgb]{0,0,0}\put(4801,-4561){\line( 1, 0){2400}}
}%
{\color[rgb]{0,0,0}\put(4801,-4861){\line( 1, 0){2400}}
}%
\thicklines%
\linethickness{2pt}%
{\color[rgb]{0,0,0}\put(4801,-5161){\line( 1, 0){2415}}
}%
\put(3401,-1361){\makebox(0,0)[lb]{\smash{{{$\comb$}%
}}}}
\put(4401,-2821){\makebox(0,0)[lb]{\smash{{{$I_0$}%
}}}}
\put(4401,-4021){\makebox(0,0)[lb]{\smash{{{$I_1$}%
}}}}
\put(4401,-4621){\makebox(0,0)[lb]{\smash{{{$I_2$}%
}}}}
\put(5401,-5541){\makebox(0,0)[lb]{\smash{{{$I$}%
}}}}
\put(4721,-5541){\makebox(0,0)[lb]{\smash{{{$0$}%
}}}}
\end{picture}%
\end{center}
\caption{\label{fig1}%
The topologist's comb $\comb$.}
\end{figure}

We obtain the following result,
which is a special case of Theorem~\ref{thm-main}.

\begin{thm} \label{thm-intro}
Let $f: \bdy \comb \to \eR$ be such that
$f|_{\bdy \comb \setm I} \in \Cbdd(\bdy \comb \setm I)$.
Then $f$ is resolutive, and 
the Perron solution $\Hp f$  is independent of
the values of $f$ on $I$, i.e.\
if $h=f$ on $\bdy \comb \setm I$,
then $\Hp h= \Hp f$.
\end{thm}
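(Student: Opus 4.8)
The statement is flagged as a special case of Theorem~\ref{thm-main}, so the plan is to reduce it to the single fact that $I$ is \emph{negligible} for the Perron method on $\comb$, in the sense that the upper Perron solution of its indicator vanishes,
\[
\uHp\chi_I\equiv 0 \quad\text{on }\comb ,
\]
equivalently $\uHp g\equiv0$ for every bounded $g$ that vanishes on $\bdy\comb\setm I$. Once this is known, the theorem is exactly the kind of statement handled by the resolutivity-and-invariance machinery in \cite{BBS}, \cite{BBS2}, \cite{BB2} and by Theorem~\ref{thm-main} (roughly: bounded boundary data that are continuous off a negligible set are resolutive, with Perron solution independent of the values on that set); the case of $f|_I$ taking values in $\eR$ is then obtained from the bounded case by truncation and monotone convergence of Perron solutions, and from $\uHp\chi_I\equiv0$ together with $\lHp\chi_I\le\uHp\chi_I$ one also gets $\lHp\chi_I\equiv0$, i.e.\ resolutivity of $\chi_I$. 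The natural framework for these reduction steps, given that Perron solutions are not additive for $p\ne2$, is the prime end boundary, in which $I$ collapses.

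For the key fact $\uHp\chi_I\equiv0$ I would exhibit, for each $N\ge1$, a competitor $v_N$ in the family of superfunctions defining $\uHp\chi_I$ with $v_N\to0$ pointwise in $\comb$. Write $C_k=(0,1)\times(2^{-k-1},2^{-k})$ for the open channel lying between the teeth $\itoverline{I}_k$ and $\itoverline{I}_{k+1}$, and put
\[
E_N=\Bigl(\bigcup_{k\ge N}\itoverline{I}_k\Bigr)\cup\bigl(\{1\}\times(0,2^{-N}]\bigr),
\qquad
v_N=\uHp\chi_{E_N}
\]
(the \p-capacitary potential of $E_N$ in $\comb$), which is \p-harmonic with $0\le v_N\le1$. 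Two things must be checked. First, $v_N$ is a superfunction for $\chi_I$, i.e.\ $\liminf_{\comb\ni y\to\xi}v_N(y)\ge1$ for every $\xi\in I$. A point $y\in\comb$ close to $\xi=(x_0,0)$, with $0<x_0\le1$, lies in a channel $C_k$ with $k$ large, hence $k\ge N$; the top, bottom and right-hand sides of $C_k$ all lie in $E_N$ while its left-hand entrance $\{0\}\times(2^{-k-1},2^{-k})$ does not, so the function $\psi_k$ that is \p-harmonic in $C_k$ with boundary values $1$ on those three sides and $0$ on the entrance, extended by $0$ to $\comb$, is a subfunction for $\chi_{E_N}$, whence $v_N\ge\psi_k$ on $C_k$. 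A maximum-principle/barrier estimate in long thin rectangles (the value of a bounded \p-harmonic function in such a rectangle is controlled by its data on the long sides) gives $\psi_k\to1$ as $k\to\infty$, uniformly where $x$ is bounded away from $0$; since $x_0>0$ this yields $v_N(y)\to1$ as $y\to\xi$. Secondly, $v_N(z)\to0$ for each fixed $z\in\comb$: this is a decay estimate for the \p-harmonic measure of $E_N$, which sits behind the teeth $\itoverline{I}_1,\dots,\itoverline{I}_{N-1}$. For $p=2$ it is immediate from countable additivity of harmonic measure (so the tail $\sum_{k\ge N}\uHp\chi_{\itoverline{I}_k}(z)\to0$, and the side piece is even smaller); in general one should obtain $v_N(z)\le Ca^{N}$ with $a<1$ by iterating a capacity/Caccioppoli-type estimate across the $\asymp N$ thin channels separating $z$ from $E_N$. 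Granting both points, $\uHp\chi_I\le v_N$ for all $N$, so $\uHp\chi_I\equiv0$.

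The main obstacle is this second point — the uniform-in-$p$ decay of the \p-harmonic measure of the deep teeth (and of the right-hand side piece). For $p=2$ one has Brownian motion and countable additivity; for $p\le2$ single boundary points are polar; but for $p>2$ points carry positive capacity and superharmonic functions do not blow up, so neither shortcut applies and one needs genuine nonlinear potential-theoretic estimates in the thin rectangles $C_k$, iterated over the channels between $z$ and $E_N$. The thin-channel lower bound $\psi_k\to1$ in the first point is of the same flavour but more routine. A secondary difficulty is making the first paragraph's reduction fully rigorous in the nonlinear setting, which is precisely what Theorem~\ref{thm-main} and the cited resolutivity theorems are designed to supply.
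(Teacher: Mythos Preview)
Your plan has a genuine gap at the reduction step. Invoking Theorem~\ref{thm-main} to ``supply'' the nonlinear reduction is circular, since Theorem~\ref{thm-intro} is precisely the special case of Theorem~\ref{thm-main} under discussion; and the machinery in \cite{BBS2} and \cite{BBSdir} handles perturbations on sets of \emph{capacity} zero, not sets with $\uHp\chi_I\equiv 0$, and even then only for data admitting a continuous extension to all of $\bdy\comb$. Indeed, the paper notes explicitly that the already-known fact $\bCp(I,\comb)=0$ from \cite{BBSdir} yields Theorem~\ref{thm-intro} only for $f|_{\bdy\comb\setm I}\in\Cunif(\bdy\comb\setm I)$. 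The new content is the passage to $\Cbdd$: such an $f$ need have no continuous extension to $\bdy\comb$, because its values along the teeth $I_j$ need not stabilise as $j\to\infty$. For $p\neq 2$ there is no mechanism to pass from $\uHp\chi_I=0$ to $\uHp(f+g)=\uHp f$ for bounded $g$ supported on $I$; you correctly flag non-additivity, but the one-line gesture towards the prime end boundary does not resolve it. (Your truncation-and-monotone-convergence step for unbounded $f|_I$ is also unjustified, since it is not known in general that $\uHp\min\{f,m\}\to\uHp f$.)

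The paper's argument is structurally different and, in the bounded case, uses neither $\uHp\chi_I=0$ nor $\bCp(I,\comb)=0$. It introduces special Perron solutions $\SHp f$ with respect to the noncompact prime end boundary $\bdyP\comb=\bdyExt\comb\setm I$, proves a comparison principle on $\bdyP\comb$ via exhaustion by subcombs (Theorem~\ref{thm-comp}), shows every point of $\bdyP\comb$ is $S$-regular (Proposition~\ref{prop-Sreg}), and from regularity plus comparison deduces $S$-resolutivity for $f\in\Cbdd(\bdyP\comb)$. The identity $\EHp f=\SHp f$ then follows from a second comparison in which the only obstruction is the single point $0$; this is removed via the one-point perturbation Theorem~6.3 of \cite{ABjump} (for $p>2$) or the capacity result of \cite{BBS2} (for $p\le 2$). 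Since $\SHp f$ manifestly ignores $f|_I$, so does $\Hp f=\EHp f$. Thus the essential difficulty sits at the origin, not in the ``size'' of $I$, and your competitor construction $v_N$ with its channel-decay estimate---plausible, and related to why $\bCp(I,\comb)=0$---is orthogonal to it.
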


(In the linear case, $p=2$, this is well known and can be obtained more easily.)

The Perron method was introduced independently by 
Perron~\cite{perron} and Remak~\cite{remak} in the 1920s
for harmonic functions.
The linear theory was developed further by Wiener and Brelot,
and the method is therefore 
often called the PWB method in the linear case.
In the nonlinear case the theory was
developed by
Granlund--Lindqvist--Martio~\cite{GrLiMa86},
Kilpel\"ainen~\cite{Kilp89}
and 
Heinonen--Kilpel\"ainen--Martio~\cite{HeKiMa}
for unweighted and weighted $\R^n$.
In particular the resolutivity was obtained 
for continuous $f: \bdy \Om \to \R$ for arbitrary bounded domains
$\Om \subset \R^n$ (in the unweighted case in \cite{Kilp89}
and in the weighted case in \cite{HeKiMa}).

The first invariance result of the kind above 
(in the nonlinear case)
was obtained in
Bj\"orn--Bj\"orn--Shanmugalingam~\cite{BBS2}
where it was shown that if
$f \in C(\bdy \Om)$ and $h=f$ 
outside a set of \p-capacity zero, then
$h$ is resolutive and 
$\Hp h = \Hp f$.
This was obtained for bounded domains  $\Om$
in metric measure spaces (under the usual assumptions
that the metric space is complete and the measure is doubling and supports
a \p-Poincar\'e inequality).
In Bj\"orn--Bj\"orn~\cite{BBbook} this result
was improved slightly by allowing for a (sometimes) smaller capacity.
More recently, 
in Bj\"orn--Bj\"orn--Shanmugalingam~\cite{BBSdir},
it was further improved using again a (sometimes) smaller capacity
$\bCp(\,\cdot\,,\Om)$ introduced therein,
which sees the boundary from inside $\Om$ 
(see \cite{BBSdir} for the precise definition).
In particular, it was shown in Example~10.2 
in \cite{BBSdir}
that 
\begin{equation} \label{eq-bCpI}
     \bCp(I,\comb)=0,
\end{equation}
so that
Theorem~\ref{thm-intro} was obtained therein for functions $f$ for which 
there exists $k \in C(\bdy \comb)$ such that $k=f$ on $\bdy \comb \setm I$,
i.e.\ $f$ such that $f|_{\bdy \comb \setm I} \in \Cunif(\bdy \comb \setm I)$.

The significance of Theorem~\ref{thm-intro} is that we do not assume any continuity
at points in $I$, or more precisely consider functions
in $\Cbdd(\bdy \comb \setm I)$.
That Theorem~\ref{thm-intro} is not true
for unbounded functions in 
$C(\bdy \comb \setm I)$ is shown in Example~\ref{ex-1},
as such functions need not be resolutive.

In Theorem~\ref{thm-main} we obtain a generalization of Theorem~\ref{thm-intro}
which is connected with the prime end boundary of $\comb$.
Here it is not the classical prime end boundary of Carath\'eodory~\cite{car}
which is used. Instead it is the prime end definition introduced
in 
Adamowicz--Bj\"orn--Bj\"orn--Shan\-mu\-ga\-lin\-gam~\cite{ABBSprime}
which is the natural choice in this paper.
The noncompactness of the prime end closure of the comb
leads to some new phenomena, see Section~\ref{sect-bdy-reg}.
In domains which are so-called finitely connected at the boundary,
the prime end closure is compact and the theory
of Perron solutions with respect to the prime end boundary for
such domains was developed in 
Bj\"orn--Bj\"orn--Shanmugalingam~\cite{BBSdir}.
Estep--Shanmugalingam~\cite{ES}
are studying similar problems when the prime end closure is
noncompact.

Let us compare our result with the unit disc $\ud$ in the plane
and let $x_0=(1,0)$. 
Let also $f: \bdy \ud \to \eR$ be a function such that 
$f|_{\bdy \ud \setm \{x_0\}}$ is bounded and continuous.
If $f$ is semicontinuous then $f$ is resolutive (for this we need
to use that $\ud$ is a regular domain), 
see 
Proposition~9.31 in Heinonen--Kilpel\"ainen--Martio~\cite{HeKiMa}
and Proposition~7.3 in Bj\"orn--Bj\"orn--Shanmugalingam~\cite{BBS2}
(or Proposition~10.32 in \cite{BBbook}),
but if $f(x_0)$ is such 
that $f$ is not semicontinuous, then it is not  known if $f$ is resolutive.
Moreover, all choices of $f(x_0)$ which make $f$ upper semicontinuous
yield the same Perron solution, by 
Proposition~7.3 in \cite{BBS2}
(or Proposition~10.32 in \cite{BBbook}).
Similarly all choices of $f(x_0)$ which make $f$ lower  semicontinuous
yield the same Perron solution, but we do not know
if this Perron solution is the same as the one for upper semicontinuous
choices of $f(x_0)$.
If $f$ has a jump discontinuity at $x_0$, then we do know
that $f$ is resolutive for all choices of $f(x_0)$ and that the Perron solutions
all agree (i.e.\ are independent of $f(x_0)$), by 
Theorems~6.3 and 7.3 in Bj\"orn~\cite{ABjump}.
(For $p \le 2$ it is not too difficult to deduce this using
the earlier results in 
Bj\"orn--Bj\"orn--Shanmugalingam~\cite{BBS3}.)
Thus we have less general invariance results for perturbations on a single
point on the boundary of $\ud$ than those we obtain in this paper for perturbations
on $I$ on the boundary of the comb $\comb$. 
(Above the regularity of $\ud$ was important, but there are
some  results in this direction in  \cite{ABjump}  
which hold also for semiregular sets.)

The outline of the paper is as follows:
In Section~\ref{sect-comb} the comb and its various boundaries are introduced,
while in Section~\ref{sect-Perron} the Perron solutions considered in this paper 
are defined.
In Section~\ref{sect-bdy-reg} we obtain some boundary regularity results which
will be essential for us.
The main result (Theorem~\ref{thm-main}) is obtained in 
Section~\ref{sect-main}.
Finally in Section~\ref{sect-jumps} we combine the ideas
in this paper with some ideas in Bj\"orn~\cite{ABjump} to
obtain a generalization of our main result.

\begin{ack}
The author was supported by the Swedish Research Council.
Part of this research was done while the author
was a Fulbright scholar (supported by the Swedish
Fulbright Commission)  
visiting 
the University of Cincinnati in 2010.
He would like to thank Tomasz Adamowicz, Jana Bj\"orn and 
Nageswari Shanmugalingam for 
helpful discussions related to this research.
\end{ack}

\section{The toplogist's comb}
\label{sect-comb}

The aim of this paper is to study the Dirichlet problem on 
the comb $\comb$.
The boundary points of $\comb$ are of three different types that will be of interest
to us.

A boundary point $x_0 \in \bdy \comb$
is \emph{accessible} if there is a continuous mapping (a curve)
$\ga:[0,1] \to \overline{\comb}$ such that $\ga(1)=x_0$ and $\ga([0,1)) \subset \comb$.
The set $I$ consists of all the inaccessible boundary points.

The boundary points in $I_j$ each have two natural counterparts in the extended
boundary we shall define below, one by taking limits from below and one from above.
To be more precise,
set $\theta_j=2^{-j}$, $j=1,2,\ldots$, and define $F : \comb \to \R^3$ by letting
\[
   F(x_1,x_2)=\begin{cases}
     (x_1,x_2,0), & \text{if } (x_1,x_2) \in (-1,0] \times (0,2), \\
     (x_1 \cos \theta_j,x_2,x_1 \sin \theta_j), 
            & \text{if } (x_1,x_2) \in (0,1) \times (2^{-j},2^{1-j}) ,\ j=0,1,\ldots.
     \end{cases}
\]
Let $\combExt$ be $\comb$ equipped with the distance 
$\dist_{\rm Ext}(x,y) = |F(x)-F(y)|$, $x,y \in \comb$.
Let also $\bdyExt \comb=\overline{\combExt} \setm \comb$, 
where $\overline{\combExt}$ is the completion of $\combExt$.
Each point in $\bigcup_{j=0}^\infty I_j$ corresponds to two points in this extended
boundary, whereas all other points in $\bdy \comb$ have one counterpart in 
$\bdyExt \comb$.
Let also $\Phi:\bdyExt \comb \to \bdy \comb$ be the natural map.

The extended boundary is closely related to prime end boundaries.
In the Carath\'eodory prime end theory the only difference is
that the closed interval $\itoverline{I}$ corresponds to one prime end, apart
from this there is a one-to-one correspondence between the 
Carath\'eodory prime ends and the points in the extended boundary
$\bdyExt \comb$ (for this particular set).

In Adamowicz--Bj\"orn--Bj\"orn--Shan\-mu\-ga\-lin\-gam~\cite{ABBSprime}
a different definition of prime ends was proposed, which in the case of
the comb gives a natural one-to-one correspondence between its
prime end boundary $\bdyP \comb$  and the points in 
$\bdyExt \comb \setm I$,  whereas
there are no prime ends corresponding to points in $I$,
see Example~5.1 in \cite{ABBSprime}.
As we shall see this prime end boundary, and the associated topology,
will be of more interest in this paper than the  
Carath\'eodory prime end boundary.
For us it is enough to know that $\bdyP \comb= \bdyExt \comb \setm I$,
and we refer to \cite{ABBSprime} for their definition of prime ends.

If we introduce the \emph{Mazurkiewicz distance} 
(sometimes called \emph{inner diameter distance}) $\dM$ on $\comb$ by letting
\[
     \dM(x,y) =\inf \diam E,
\]
where the infimum is taken over all connected sets $E \subset \comb$
containing $x,y \in \comb$,
then $\bdyM \comb = \bdyP \comb$.
Here $\bdyM \comb=\clcombm \setm \comb$, 
where $\clcombm$ is the completion of $(\comb,\dM)$,
and the equality $\bdyM \comb = \bdyP \comb$
is understood in the sense that there is a homeomorphism
$H: \clcombm \to \clOmP$ such that $H|_{\comb}$ is the identity.
(For more on the Mazurkiewicz distance see \cite{ABBSprime} and 
Bj\"orn--Bj\"orn--Shanmugalingam~\cite{BBSdir},~\cite{BBStop}.)

Note that $\bdyExt \comb \setm I = \bdyP \comb = \bdyM \comb$ is \emph{not}
compact.

\section{Perron solutions}
\label{sect-Perron}

\begin{deff}
A function $u: \Om \to \R \cup \{\infty\}$ is \emph{\p-superharmonic} in
a domain (i.e.\ nonempty open connected set) $\Om$
if
\begin{enumerate}
\item $u$ is lower semicontinuous;
\item $u \not\equiv \infty$;
\item for each domain $G \Subset \Om$ and each $h \in C(\itoverline{G})$
which is \p-harmonic in $G$ and such that $h \le u$ on $\bdy G$
it is true that $h \le u$ in $G$.
\end{enumerate}
A function $v: \Om \to \R \cup \{-\infty\}$ is \emph{\p-subharmonic}
if $-v$ is \p-superharmonic.
\end{deff}

We will be interested in two types of Perron solutions.
We denote the standard Perron solutions using the letter $P$ and the
special ones using $S$.
For the latter we consider $\combExt$ with the prime end boundary
$\bdyP \comb:=\bdyExt \comb \setm I$.
The special Perron solutions are primarily used as a tool
in our study of the standard ones.

\begin{deff}   \label{def-Perron}
Given a function $f : \bdyExt \comb \to \eR$,
let $\UU_f$ be the set of all 
\p-superharmonic functions $u$ on $\comb$
bounded from below
such that 
\[ 
	\liminf_{\combExt \ni y \to  x} u(y) \ge f(x) \quad \text{for all }
	x \in \bdyExt \comb.
\] 
The \emph{extended upper Perron solution} of $f$ is the function
\[ 
    \EuHp f (x) = \inf_{u \in \UU_f}  u(x), \quad x \in \comb.
\]

Let similarly, for $f : \bdyP \comb  \to \eR$, 
$\UUt_f$ be the set of all 
\p-superharmonic functions $u$ on $\comb$
bounded from below
such that 
\[ 
	\liminf_{\combExt \ni y \to  x} u(y) \ge f(x) \quad \text{for all }
	x \in \bdyP \comb.
\] 
The  \emph{special upper Perron solution} of $f$ is the function
\[ 
    \SuHp f (x) = \inf_{u \in \UUt_f}  u(x), \quad x \in \comb.
\]

The lower Perron solutions are defined similarly using \p-subharmonic functions, 
or equivalently
by letting
\[
    \ElHp f = - \EuHp (-f)
    \quad \text{and} \quad
    \SlHp f = - \SuHp (-f).
\]
If $\EuHp f = \ElHp f$, then we let 
$\EHp f := \EuHp f$
and $f$ is said to be \emph{$\EHp$-resolutive}.
We similarly define $\SHp f$ and \emph{$\SHp$-resolutivity}.

We also similarly define $\uHp f$, $\lHp f$ and $\Hp f$
for $f : \bdy \comb \to \eR$,
and  $\uHpind{\Om} f$,  $\lHpind{\Om} f$ and $\Hpind{\Om} f$
for $f: \bdy \Om \to \eR$ for bounded domains $\Om$.
\end{deff}

The proof that standard Perron solutions are \p-harmonic or
identically $\pm \infty$ directly carries over to our special
Perron solutions, see Theorem~9.2 in 
Heinonen--Kilpel\"ainen--Martio~\cite{HeKiMa}.

The following comparison principle shows
that $\SlHp f \le \SuHp f$ and $\ElHp f \le \EuHp f$ for all functions $f$.
Since it is immediate that $\ElHp f \le \SlHp f$ and $\SuHp f \le \EuHp f$, 
we find that
\begin{equation} \label{eq-fund}
        \ElHp f \le \SlHp f \le \SuHp f \le \EuHp f.
\end{equation}
Moreover, if $f : \bdy \comb \to \eR$, then $f$
can naturally
be seen as a function on $\bdyExt \comb$, and we will do so 
without further ado.
It is easy to see that in this case we always have
$\lHp f= \ElHp f$ and
$\uHp f= \EuHp f$.

Another obvious fact is that
if $f_1 \le f_2$, then $\EuHp f_1 \le \EuHp f_2$ and similar
inequalities also follow for all the 
other (lower and upper) types of Perron solutions.
We will say that this inequality holds by \emph{simple comparison}.
This should be seen in relation to the following important
comparison principle.

\begin{thm} \label{thm-comp}
\textup{(Extended comparison principle)}
Assume that $u$ is \p-superharmonic and
$v$ is \p-subharmonic in\/ $\comb$.
If
\begin{equation} \label{eq-comp-1}
         \liminf_{\combExt \ni y \to x} (u(y)-v(y)) \ge 0
         \quad \text{for all } x \in \bdyP \comb,
\end{equation} 
which in particular holds if 
\begin{equation} \label{eq-comp}
       \infty \ne  \limsup_{\combExt \ni y \to x} v(y)
        \le \liminf_{\combExt \ni y \to x} u(y) \ne -\infty
        \quad \text{for all } x \in \bdyP \comb,
\end{equation} 
then $v \le u$ in\/ $\comb$.
\end{thm}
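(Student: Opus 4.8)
The plan is to reduce the extended comparison principle to the ordinary comparison principle applied on a suitable exhaustion of $\comb$ by domains compactly contained in $\comb$, handling the boundary behaviour at the inaccessible points $I$ by hand. First I would observe that it suffices to prove the theorem under the stronger hypothesis \eqref{eq-comp}: if only \eqref{eq-comp-1} holds, then since $u-v$ is $\p$-superharmonic (sum of a $\p$-superharmonic and a $\p$-subharmonic function — wait, this is not quite true in the nonlinear theory, so instead) one argues directly by a standard truncation/$\eps$-argument, replacing $u$ by $u+\eps$ and using lower semicontinuity, or one simply proceeds with $w := u - v$ only in the linear-looking bookkeeping and keeps $u$, $v$ separate. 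Concretely, fix $\eps>0$; the set $G_\eps = \{x \in \comb : v(x) > u(x) + \eps\}$ is open (by semicontinuity of $u$ and $v$), and the goal is to show $G_\eps = \emptyset$; letting $\eps \to 0$ then gives $v \le u$.

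The key geometric point is that the only part of $\bdy\comb$ not captured by $\bdyP\comb = \bdyExt\comb \setm I$ is the inaccessible segment $I = (0,1]\times\{0\}$, and $I$ is \emph{not} reachable from inside $G_\eps$ in a way that matters: any point of $\bdy G_\eps$ that is not an interior point of $\comb$ lies in $\itoverline{G_\eps} \setm \comb$, and I claim $\itoverline{G_\eps}\cap\comb$ together with its closure in $\combExt$ meets the boundary only in $\bdyP\comb$. The reason is that $G_\eps$, being open in $\comb$, decomposes into components, each of which is an open connected subset of the comb; a component that accumulates at a point of $I$ would have to be unbounded in the $\dM$-metric, but on such a component we can still run the argument because approaching $I$ forces $x_1 \to$ something with $x_2 \to 0$, and this is exactly where we have NO control from the hypothesis — so the real content is that $G_\eps$ cannot in fact accumulate at $I$. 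To see this, note that near $I$ the comb consists of the thin vertical channels between consecutive teeth $I_{j-1}, I_j$ near $x_1 = 1$ together with the part $x_1 \le 0$; but the part $x_1 \le 0$ has its boundary entirely in $\bdyP\comb$, so there \eqref{eq-comp} applies directly. Thus the main obstacle, and the heart of the proof, is to show that the open set $G_\eps$ has the property that its boundary (taken in $\overline{\combExt}$) is contained in $\bdyP\comb \cup \comb$, i.e.\ avoids $I$; equivalently, that $\limsup_{\combExt \ni y \to x}(v(y)-u(y)) \le 0$ holds automatically (not by hypothesis) for $x \in I$, because a sequence in $\comb$ converging in $\combExt$ to a point of $I$ must travel along the teeth-channels and one can estimate $v$ from above and $u$ from below along such channels using the hypothesis at the nearby accessible prime ends $\uHpind{}$ of the teeth $I_j$.

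Once that is established, fix a component $\Om'$ of $G_\eps$ and exhaust it by domains $G_k \Subset \Om' \subset \comb$ with $G_k \nearrow \Om'$. On $\bdy G_k$ (relative to $\comb$) we have $v \le u + \eps$ except possibly on the part of $\bdy G_k$ near $\bdy\Om'$; by lower semicontinuity of $u$ and upper semicontinuity of $v$, together with the boundary estimate \eqref{eq-comp} at points of $\bdyP\comb$ and the just-proved estimate at $I$, we get $v - \eps \le u$ on $\bdy G_k$ for $k$ large after a further harmless truncation (replacing $u$ by $\min\{u,M\}$ and $v$ by $\max\{v,-M\}$ to make them bounded and continuous on $\overline{G_k}$, or invoking the standard comparison principle for $\p$-super/subharmonic functions on $G_k$ directly, e.g.\ Theorem~7.6 / Corollary~7.8 type results in \cite{HeKiMa} or \cite{BBbook}). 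The ordinary comparison principle on $G_k$ then yields $v \le u + \eps$ in $G_k$, hence in all of $\Om'$, contradicting $\Om' \subset G_\eps$ unless $\Om' = \emptyset$. Therefore $G_\eps = \emptyset$ for every $\eps > 0$, and letting $\eps \to 0^+$ gives $v \le u$ in $\comb$, as claimed.

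I expect the delicate step to be making the argument "a sequence converging in $\combExt$ to a point of $I$ runs along the channels, where $v$ is bounded above and $u$ bounded below via the accessible boundary data" fully rigorous — this uses the specific geometry of the comb (the channels between teeth pinch in width but their closures in $\combExt$ touch $\bdyExt\comb$ only along the sides $\{0\}\times(0,2)$, the teeth, and the outer frame, all of which lie in $\bdyP\comb$), and possibly a Harnack- or maximum-principle argument inside each channel to propagate the bound down toward $I$. Everything else is routine exhaustion plus the classical comparison principle.
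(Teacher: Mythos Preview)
There is a genuine gap at what you correctly identify as the heart of the matter: controlling $v-u$ near the inaccessible segment $I$. Your claim that the closures in $\combExt$ of the channels $(0,1)\times(2^{-j},2^{1-j})$ meet the boundary only in $\bdyP\comb$ is false. The left edge $\{0\}\times(2^{-j},2^{1-j})$ of each channel consists (apart from the two tooth tips) of \emph{interior} points of $\comb$, not prime-end boundary points; the segment $\{0\}\times(0,2)$ that you list is not part of $\bdyP\comb$ at all. The hypothesis \eqref{eq-comp-1} therefore tells you nothing about $u-v$ on that edge, and you cannot run a comparison or maximum-principle argument on an individual channel to propagate bounds toward $I$: one whole side of the channel's relative boundary sits in the interior of $\comb$, where nothing is known a priori. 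Harnack does not help either, since $u-v$ is neither \p-harmonic nor of definite sign for $p\ne2$. (Your exhaustion step is also garbled: if $G_k\Subset\Om'\subset G_\eps$ then $\bdy G_k\subset\Om'$, where $v>u+\eps$, the opposite of what you want on $\bdy G_k$.)

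The paper sidesteps all of this with a single observation you are missing: use the hypothesis \eqref{eq-comp-1} at the \emph{origin} $0=(0,0)\in\bdyP\comb$. Since $|F(y)|=|y|$, convergence $y\to 0$ in $\combExt$ coincides with Euclidean convergence, so for each $\eps>0$ there is $k$ with $v(y)\le u(y)+\eps$ for all $y\in\comb$ with $|y|<2^{-k}$. Now pass to the truncated comb $\comb_k=\comb\setminus([0,1)\times(0,2^{-k}))$, equipped with its Mazurkiewicz distance. This subdomain is finitely connected at the boundary, the segment $I$ is no longer reachable from it, and the only new boundary created by the cut is $\{0\}\times(0,2^{-k})$, which lies inside the Euclidean ball $\{|y|<2^{-k}\}$ where you have just secured $v\le u+\eps$. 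The known comparison principle for $\combExt_k$ (Proposition~7.2 in \cite{BBSdir}) then gives $v\le u+\eps$ throughout $\comb_k$; since any fixed $x_0\in\comb$ lies in $\comb_k$ for large $k$, letting $\eps\to0$ finishes the proof in a few lines.
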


This can be achieved by a modification of the proof
of the corresponding result for standard Perron solutions, 
see Theorem~3.1 in Bj\"orn~\cite{ABjump}.
We here instead use the comparison principle on $\combExt_k$
as a tool, to give a shorter proof,
where
$\comb_k:=\comb \setm ([0,1) \times (0,2^{-k}))$ 
equipped with the Mazurkiewicz distance.
The comparison principle on $\combExt_k$ is given in
Proposition~7.2 in Bj\"orn--Bj\"orn--Shanmugalingam~\cite{BBSdir} 
under the assumption \eqref{eq-comp}.
The proof therein however first deduces \eqref{eq-comp-1} and then
proceeds from this assumption.

\begin{proof}
Let $x_0 \in \comb$ and $\eps>0$. 
Then there is $k$ such that $x_0 \in \comb_k$
and such that $v(y) \le u(y)+\eps$ if $|y|<2^{-k}$.
Hence
\[
        \liminf_{\combExt_k \ni y \to x} (u(y)+\eps -v(y))  \ge 0
        \quad \text{for all } x \in \bdyExt \comb_k.
\]
By the comparison principle for $\combExt_k$,
see the proof of Proposition~7.2 in \cite{BBSdir}, 
we get that
$v \le u+\eps$ in $\comb_k$, and in particular
$v(x_0)  \le u(x_0)+\eps$.
Letting $\eps \to 0$ completes the proof.
\end{proof}

\section{Boundary regularity}
\label{sect-bdy-reg}

The prime end boundary $\bdyP \comb$ is not compact, and thus we have to
take extra care when defining boundary regularity. 
The three classes $C(\bdyP \comb)$, 
$\Cbdd(\bdyP \comb)$ (of bounded continuous functions)
and
$\Cunif(\bdyP \comb)$ (of uniformly continuous functions)
do not coincide as they do on compact sets.
For the results in this paper it seems that $\Cbdd(\bdyP \comb)$ 
is the right choice in the following definition of boundary regularity.

\begin{deff}
A point $x_0 \in \bdyP \comb$ is \emph{$S$-regular} if
\[
       \lim_{\combExt \ni y \to x_0} \SuHp f(y) = f(x_0)
       \quad \text{for all }f \in \Cbdd(\bdyP \comb).
\]
\end{deff}

That we cannot allow for general $f \in C(\bdyP \comb)$ is due to the fact
that there are $f,h \in C(\bdyP \comb)$ such that $\SHp f \equiv \SuHp h \equiv \infty$
and $\SlHp h \equiv -\infty$, 
as shown by the following example.

\begin{example} \label{ex-1}
Fix $x_0 \in \comb$ and
let  $f_j(x)=(1-2^{j+2}|x-y_j|)_\limplus$, $j=1,2,\ldots$,  where
$y_j=(1,3 \cdot 2^{-j-1})$.
By the $S$-regularity of $y_j$ (shown 
in Proposition~\ref{prop-Sreg}
below)
we see that $\SlHp f_j \not \equiv 0$.
Thus the strong minimum principle,
see Theorem~7.12 in  
Heinonen--Kilpel\"ainen--Martio~\cite{HeKiMa},
yields $\SlHp f_j(x_0)>0$.
Let 
\[
     f=\sum_{j=1}^\infty \frac{2j f_{2j}}{\SlHp f_{2j}(x_0)} \in C(\bdyP \comb)
     \quad \text{and} \quad
     h=\sum_{j=1}^\infty \frac{(-1)^jj f_j}{\SlHp f_j(x_0)} \in C(\bdyP \comb).
\]
Then $\SlHp f(x_0) \ge j$ for all $j$, and thus
$\SlHp f(x_0)=\infty$.
It follows that $\SHp f \equiv \infty$.

Moreover, if $u \in \UUt_h$ then, by definition,  
$u \ge -m$ for some real $m \ge 0$.
Hence $0 \le u+m \in \UUt_f$, but this contradicts the fact that
$\SuHp f \equiv \infty$.
Thus there is no such $u$, i.e.\ $\SuHp h \equiv \infty$.
Similarly $\SlHp h \equiv -\infty$.

By \eqref{eq-fund} it follows that $\uHp h \equiv \EuHp h \equiv \infty$
and $\lHp h \equiv \ElHp h \equiv -\infty$.
Hence the resolutivity in Theorem~\ref{thm-intro} is not true
for arbitrary unbounded continuous functions 
on $\bdy \comb \setm I$.
\end{example} 

\begin{prop} \label{prop-Sreg}
Let  $x_0 \in \bdyP \comb$.
Then $x_0$ is $S$-regular.
\end{prop}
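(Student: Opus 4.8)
The plan is to establish $S$-regularity of an arbitrary point $x_0 \in \bdyP \comb$ by reducing to the regularity theory on the truncated combs $\comb_k = \comb \setm ([0,1) \times (0,2^{-k}))$, which are finitely connected at the boundary and whose prime end (equivalently, Mazurkiewicz) boundaries are compact, so that the full strength of the theory in Bj\"orn--Bj\"orn--Shanmugalingam~\cite{BBSdir} applies there. The key point is that every point of $\bdyP \comb$ lies in $\bdyExt \comb_k$ for all sufficiently large $k$, and on $\comb_k$ the corresponding boundary point should be regular for the standard (prime-end) Dirichlet problem; then one transfers this regularity back to $\comb$ using the comparison principle on $\combExt_k$ (Theorem~\ref{thm-comp} and its proof) to sandwich $\SuHp f$ near $x_0$ between the Perron solutions on $\comb_k$ of suitably perturbed data.

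**More concretely**, I would proceed as follows. Fix $x_0 \in \bdyP \comb = \bdyExt \comb \setm I$ and $f \in \Cbdd(\bdyP \comb)$, say $|f| \le M$. First, observe that $x_0 \in \bdyExt \comb_k$ for all large $k$ (since $x_0$ is at positive distance from $I$ in the extended metric, it survives the truncation). Second, fix $\eps > 0$. Since $f$ is continuous at $x_0$ with respect to $\dist_{\rm Ext}$, there is a neighbourhood $U$ of $x_0$ in $\overline{\combExt}$ on which $|f - f(x_0)| < \eps$. Now compare $\SuHp f$ on $\comb$ with the standard prime-end Perron solution on $\comb_k$ for a boundary datum $g_k$ on $\bdyExt \comb_k$ that agrees with $f$ near $x_0$ (restricted to the part of $\bdyP \comb$ lying in $\bdyExt \comb_k$) and equals the constant $M$ on the new boundary piece $[0,1) \times \{2^{-k}\}$ created by truncation. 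Any $u \in \UUt_f$ is $\p$-superharmonic on $\comb_k \subset \comb$ and is bounded below; adding a constant if necessary, one checks $u + (\text{const})$ competes in the upper class for $g_k$ on $\comb_k$, giving an upper bound $\SuHp f \le \Hpind{\comb_k} g_k + C2^{-k}$-type estimates near $x_0$; symmetrically, using $\ElHp$ and a datum $\le f$ near $x_0$ and $= -M$ on the truncation piece, one gets a lower bound. Third, invoke the regularity of the prime-end boundary point corresponding to $x_0$ on the compact-prime-end-boundary domain $\comb_k$: by the Kellogg-type / barrier results for domains finitely connected at the boundary in \cite{BBSdir}, $\Hpind{\comb_k} g_k(y) \to g_k(x_0) = f(x_0)$ as $\combExt_k \ni y \to x_0$ (here one needs that the specific point $x_0$ on the comb's truncated boundary is regular — a barrier argument, since $x_0$ lies on the outer rectangle wall or at a tooth tip where a half-plane/corkscrew-type barrier exists). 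Combining the three steps: for $y$ near $x_0$, $f(x_0) - \eps - o(1) \le \SlHp f(y) \le \SuHp f(y) \le f(x_0) + \eps + o(1)$, and letting $y \to x_0$ and then $\eps \to 0$ gives $\lim_{y \to x_0} \SuHp f(y) = f(x_0)$.

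**The main obstacle** will be the transfer step between $\comb$ and $\comb_k$: one must control what the truncation does to the upper/lower classes. The subtlety is that $\SuHp f$ is defined by superharmonic functions on all of $\comb$ that are merely bounded below, whereas the regularity machinery lives on $\comb_k$ whose extra boundary component $[0,1)\times\{2^{-k}\}$ carries no data from $f$; the comparison-principle argument in the proof of Theorem~\ref{thm-comp} (which already handles exactly this truncation, estimating $v \le u + \eps$ on $\comb_k$ from a bound $v \le u + \eps$ near $|y| < 2^{-k}$) is the right tool, but one has to feed it correctly so that the $\eps$-loss from the truncation piece is genuinely controllable and vanishes as $k \to \infty$. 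A secondary point is verifying that the point $x_0$ is regular for the standard prime-end Dirichlet problem on $\comb_k$: for $x_0$ on the outer boundary $\partial((-1,1)\times(0,2))$ this is classical (Lipschitz/Lebesgue-density barrier), and for $x_0$ a point of $\bdyExt \comb_k$ sitting over one of the tooth segments $I_j$ with $j \le k$ (approached from above or below) one uses that locally $\comb$ near such a point looks, in the extended metric, like a half-disc, which admits a barrier; the endpoints $(0, 2^{-j})$ where teeth meet the wall and the tip points $(1, 2^{-j})$ also admit barriers by a corkscrew condition. I would organize these barrier verifications as a short lemma or cite the corresponding regularity criteria from \cite{BBSdir} and \cite{BBbook} directly.
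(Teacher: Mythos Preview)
Your overall strategy---localize to an auxiliary domain where classical regularity applies and transfer back---is the right idea, but the transfer step as you describe it has the inequality running the wrong way. You say that any $u \in \UUt_f$, after adding a constant, competes in the upper class for $g_k$ on $\comb_k$, ``giving an upper bound $\SuHp f \le \Hpind{\comb_k} g_k + C2^{-k}$''. But if every $u \in \UUt_f$ (or $u+C$) lies in the upper class for $g_k$, then each such $u$ satisfies $u \ge \uHpind{\comb_k} g_k$ on $\comb_k$, and taking the infimum yields $\SuHp f \ge \uHpind{\comb_k} g_k - C$, not $\le$. Moreover, the constant you need to add so that $u+C \ge M$ on the truncation piece depends on the (arbitrary) lower bound of $u$, so no uniform ``$C2^{-k}$-type'' control is available. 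The correct mechanism is the reverse: take a competitor $v$ in the upper class for data $g$ on the auxiliary domain (with $g=f$ on the shared boundary and $g=M:=\sup f$ on the new piece), and use the pasting lemma to extend $\min\{v,M\}$ by the constant $M$ to all of $\comb$; this extension lies in $\UUt_f$, whence $\SuHp f \le v$ on the auxiliary domain, so $\SuHp f \le \uHpind{\text{aux}} g$ there, and regularity of $x_0$ for the auxiliary problem gives $\limsup_{y\to x_0}\SuHp f(y)\le f(x_0)$. The lower bound then comes from applying this to $-f$ and using $\SlHp f \le \SuHp f$.

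The paper carries out exactly this pasting argument but with a much simpler auxiliary domain, avoiding $\comb_k$ and the machinery of \cite{BBSdir} entirely. If $\Phi(x_0)$ has a unique preimage, no auxiliary domain is needed at all: one finds $h\in C(\bdy\comb)$ with $h\ge f$ on $\bdyP\comb$ and $h(\Phi(x_0))=f(x_0)$, so $\SuHp f \le \SuHp h \le \Hp h$, and the classical Wiener-criterion regularity of $\Phi(x_0)\in\bdy\comb$ gives $\lim_{y\to x_0}\Hp h(y)=f(x_0)$. If $\Phi(x_0)$ has two preimages (so $x_0$ sits on one side of a tooth $I_j$), the auxiliary domain is simply the rectangle $G$ between two adjacent teeth; one pastes competitors from $G$ to $\comb$ as above and uses that every boundary point of a rectangle is regular. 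This is shorter, requires only the classical Wiener criterion on Euclidean domains, and sidesteps the barrier verifications on $\comb_k$ that you flag as a secondary task.
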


\begin{proof}
We will use that all boundary points of $\bdy \comb$ are
regular (with respect to the standard nonextended Perron solutions),
which is well-known and e.g.\ follows from the sufficiency part of the
Wiener criterion, see Maz{\cprime}ya~\cite{mazya70}.
To do so we need to distinguish those points $x_0$ 
for which $\Phi(x_0)$ has a unique
preimage in $\bdyP \comb$ and those which have two preimages.
(Recall that $\Phi:\bdyExt \comb \to \bdy \comb$ is the natural map.)

\medskip

\emph{Case} 1. \emph{$\Phi(x_0)$ has the unique preimage $x_0$.}
Let $f \in \Cbdd(\bdyP \comb)$.
Then we can find $h \in C(\bdy \comb)$ such that $h(x_0)=f(x_0)$
and $h \ge f$ on $\bdyP \comb$.
By simple comparison, 
we have $\SuHp f \le \SuHp h \le \EHp h = \Hp h$ in $\comb$.
Thus, using that all boundary points of $\comb$ are regular for
the standard Perron solutions, we see that
\[
       \limsup_{\combExt \ni y \to x_0} \SuHp f(y) 
       \le \lim_{\comb \ni y \to x_0} \Hp h(y) 
      = h(x_0)=f(x_0).
\]
Similarly,
$
       \liminf_{\combExt \ni y \to x_0} \SlHp f(y) 
       \ge f(x_0)$,
which together with the inequality $\SlHp f\le \SuHp f$
shows that 
$
       \lim_{\combExt \ni y \to x_0} \SuHp f(y) 
       = f(x_0)$.
As $f$ was arbitrary this yields the $S$-regularity of $x_0$.

\medskip

\emph{Case} 2. \emph{$\Phi(x_0)$ has two preimages\/ \textup{(}of
which $x_0$ is one\/\textup{)}.}
In this case $x_0 \in I_j$ for some $j$ and moreover
$x_0 \in \bdy G$, where $G=(2^{-k},2^{1-k}) \times (0,1)$ for some $k$.
Let $f \in \Cbdd(\bdyP \comb)$
and assume that $0 \le f \le 1$.
Then we can find $h \in C(\bdy G)$ such that $h(x_0)=f(x_0)$,
$f \le h \le 1$ on $\bdy G \cap \bdyP \comb $ 
and $h=1$ on $\bdy G \setm \bdyP \comb $.

Let $u$ be a \p-superharmonic function competing in the definition
of $\uHpind{G} h$, and set
\[
    v = \begin{cases}
         \min\{u,1\} & \text{in } G, \\
         1 & \text{in } \comb \setm G.
      \end{cases}
\]
Then $v$ is \p-superharmonic in $\comb$, by  Pasting lemma~7.9 in
Heinonen--Kilpel\"ainen--Martio~\cite{HeKiMa},
and thus $v \in \UUt_f$.
Hence $\SuHp f \le u$ in $G$, and since $u$ was arbitrary,
$\SuHp f \le \Hpind{G} h$ in $G$.
Thus, using also that all boundary points of $G$ are regular for
the standard Perron solutions, we see that
\[
       \limsup_{\combExt \ni y \to x_0} \SuHp f(y) 
       \le \lim_{G \ni y \to x_0} \Hpind{G} h(y) 
      = h(x_0)=f(x_0).
\]
Continuing exactly as in case~1 we deduce 
the $S$-regularity of $x_0$.
\end{proof}

We can now use this to deduce $S$-resolutivity for $f \in \Cbdd(\bdyP \comb)$.

\begin{prop} \label{prop-S-resolutive}
Let $f \in \Cbdd(\bdyP \comb)$.
Then $f$ is $S$-resolutive.
\end{prop}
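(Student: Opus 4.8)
The plan is to combine the $S$-regularity of every prime end boundary point (Proposition~\ref{prop-Sreg}) with a uniform-approximation argument by continuous functions on the compact boundary $\bdy\comb$, which is the standard route to resolutivity. First I would reduce to the case $0 \le f \le 1$ by replacing $f$ with $(f-\inf f)/(\sup f - \inf f)$, using that $S$-resolutivity is stable under affine changes (which follows since $\SuHp$ and $\SlHp$ commute with addition of constants and with multiplication by positive scalars, and $\SuHp f \le \ElHp f + $const-type estimates are not even needed here). Then, for such bounded $f \in \Cbdd(\bdyP\comb)$, the key observation is that $\SuHp f$ and $\SlHp f$ are \p-harmonic functions (or $\pm\infty$, but since $0 \le \SlHp f \le \SuHp f \le 1$ by simple comparison with the constant functions $0$ and $1$, they are genuinely \p-harmonic and bounded), and by Proposition~\ref{prop-Sreg} both have boundary limit $f(x_0)$ at every $x_0 \in \bdyP\comb$.

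Next I would show the two solutions coincide using the extended comparison principle (Theorem~\ref{thm-comp}). Since $\SlHp f = -\SuHp(-f)$ is \p-subharmonic and $\SuHp f$ is \p-superharmonic, and since at every $x_0 \in \bdyP\comb$ we have, by Proposition~\ref{prop-Sreg} applied to both $f$ and $-f$,
\[
  \lim_{\combExt \ni y \to x_0} \SuHp f(y) = f(x_0) = \lim_{\combExt \ni y \to x_0} \SlHp f(y),
\]
condition \eqref{eq-comp} holds with $u = \SuHp f$ and $v = \SlHp f$ (both limits are finite, being squeezed between $0$ and $1$). Hence $\SlHp f \le \SuHp f$ in $\comb$, which combined with the reverse inequality from \eqref{eq-fund} (or simple comparison) gives $\SlHp f = \SuHp f$, i.e.\ $f$ is $S$-resolutive.

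The main subtlety I anticipate is making sure the $\pm\infty$ alternative for the \p-harmonicity of Perron solutions is genuinely excluded before invoking Theorem~\ref{thm-comp}, and more importantly checking that the hypotheses of that theorem are met at \emph{every} point of $\bdyP\comb$, not merely at a dense set — but this is exactly what Proposition~\ref{prop-Sreg} delivers, since it asserts $S$-regularity of \emph{all} $x_0 \in \bdyP\comb$. One should also note that $\bdyP\comb$ being noncompact is not an obstacle here: boundedness of $f$ replaces the compactness that would otherwise be used to pass from continuity to uniform control, and the comparison principle is applied on $\combExt$ directly rather than requiring a compactness-based exhaustion. A minor point to handle cleanly is the reduction step when $\sup f = \inf f$ (i.e.\ $f$ constant), which is trivial since then $\SuHp f = \SlHp f = f$ by $S$-regularity and simple comparison.
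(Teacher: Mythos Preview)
Your overall strategy is exactly the paper's: use Proposition~\ref{prop-Sreg} to see that both $\SuHp f$ and $\SlHp f$ attain the boundary value $f(x_0)$ at every $x_0\in\bdyP\comb$, then invoke the extended comparison principle (Theorem~\ref{thm-comp}). The preliminary normalisation to $0\le f\le 1$ and the remark about uniform approximation on $\bdy\comb$ are unnecessary but harmless.

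However, there is a genuine slip in the application of the comparison principle. You take $u=\SuHp f$ as the \p-superharmonic function and $v=\SlHp f$ as the \p-subharmonic function; Theorem~\ref{thm-comp} then yields $v\le u$, i.e.\ $\SlHp f\le \SuHp f$. But this is precisely the inequality that is \emph{already known} from~\eqref{eq-fund}, so nothing new has been gained. You then claim to combine it with ``the reverse inequality from~\eqref{eq-fund}'', but~\eqref{eq-fund} contains no such reverse inequality --- it only records $\SlHp f\le \SuHp f$. As written, the argument is circular.

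The fix is to swap the roles: since $\SuHp f$ and $\SlHp f$ are both \p-harmonic, take $\SuHp f$ as the \p-\emph{sub}harmonic function and $\SlHp f$ as the \p-\emph{super}harmonic function. Theorem~\ref{thm-comp} then gives the new inequality $\SuHp f\le \SlHp f$, which together with the standard $\SlHp f\le \SuHp f$ yields equality. This is exactly what the paper does.
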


\begin{proof}
By the regularity of $x \in \bdyP \comb$ we see that
\[
      \lim_{\combExt \ni y \to x} \SuHp f(y) = f(x)
      = \lim_{\combExt \ni y \to x} \SlHp f(y)
        \quad \text{for all } x \in \bdyP \comb.
\]
As $\SuHp f$ and $\SlHp f$ are \p-harmonic we can apply the
comparison principle (Theorem~\ref{thm-comp}), with $\SuHp f$ as the \p-subharmonic
function and $\SlHp f$ as the \p-superharmonic function, to deduce that
$\SuHp f \le \SlHp f$.
Since we always have $\SlHp f \le \SuHp f$, we see that $\SlHp f = \SuHp f$.
\end{proof}

\begin{prop} \label{prop-Extreg}
Let $x_0 \in \bdyExt \comb$. 
Then $x_0$ is \emph{$\EHp$-regular}, i.e.\
\[
      \lim_{\combExt \ni y \to x_0} \EuHp f(y) = f(x_0)
      \quad \text{for all } f \in C(\bdyExt \comb).
\]
\end{prop}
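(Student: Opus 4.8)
The plan is to follow the proof of Proposition~\ref{prop-Sreg} almost line by line, with $\SuHp$ replaced throughout by $\EuHp$, and with a little extra care at the inaccessible points. First note that $\bdyExt\comb$, unlike $\bdyP\comb$, is compact (as one sees, e.g., from the isometric embedding $F$ of $\combExt$ into a bounded subset of $\R^3$), so every $f\in C(\bdyExt\comb)$ is bounded; since $\EuHp(af+b)=a\EuHp f+b$ for $a\ge0$, we may assume $0\le f\le1$. As in the proof of Proposition~\ref{prop-S-resolutive}, it then suffices to establish the one-sided estimate
\[
   \limsup_{\combExt\ni y\to x_0}\EuHp f(y)\le f(x_0),\qquad x_0\in\bdyExt\comb,
\]
for every such $f$: applying this to $1-f$ and using $\EuHp(1-f)=1-\ElHp f$ together with $\ElHp f\le\EuHp f$ from \eqref{eq-fund} yields the reverse inequality $\liminf_{\combExt\ni y\to x_0}\EuHp f(y)\ge f(x_0)$, and the two estimates combine to give the desired limit.

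To prove the one-sided estimate I would fix $x_0$ and $\eps>0$ and split into the two cases of Proposition~\ref{prop-Sreg}, according to whether $\Phi(x_0)$ has one or two preimages under $\Phi$. If $\Phi(x_0)$ has the single preimage $x_0$, put $\bar f(q)=\max\{f(x):x\in\Phi^{-1}(q)\}$ on $\bdy\comb$; compactness of $\bdyExt\comb$ and continuity of $\Phi$ make $\bar f$ bounded and upper semicontinuous, with $\bar f(\Phi(x_0))=f(x_0)$, so $\bar f$ is a decreasing pointwise limit of functions in $C(\bdy\comb)$ and one can choose $h$ in such a sequence with $h\ge\bar f$ on $\bdy\comb$ and $h(\Phi(x_0))\le f(x_0)+\eps$. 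Viewing $h$ and $f$ as functions on $\bdyExt\comb$ via $\Phi$, we have $f\le h$, so by simple comparison and $\uHp h=\EuHp h$ (together with resolutivity of the continuous datum $h$) we get $\EuHp f\le\Hp h$ in $\comb$; since all boundary points of $\comb$ are regular for the standard Perron solutions (by the sufficiency part of the Wiener criterion, Maz{\cprime}ya~\cite{mazya70}) and $\combExt\ni y\to x_0$ forces $y\to\Phi(x_0)$ in $\R^2$, taking $\limsup$ and then $\eps\to0$ gives the estimate. If $\Phi(x_0)$ has two preimages, then $x_0\in\bdyP\comb$ lies on the boundary of one of the open rectangular cells $G\subset\comb$ between two consecutive teeth, and I would argue exactly as in Case~2 of Proposition~\ref{prop-Sreg}: choose $h\in C(\bdy G)$ with $h(\Phi(x_0))\le f(x_0)+\eps$, $h\ge f$ on the part of $\bdy G$ lying in $\bdyP\comb$, and $h\ge1$ on the part of $\bdy G$ lying in $\comb$ (possible because $\Phi(x_0)$, being on a tooth, is at positive distance from the latter part); then, for any $u$ competing in the definition of $\uHpind{G}h$, the function equal to $\min\{u,1\}$ on $G$ and to $1$ on $\comb\setm G$ is \p-superharmonic in $\comb$ by Pasting lemma~7.9 in~\cite{HeKiMa} and lies in $\UU_f$, so $\EuHp f\le\Hpind{G}h$ in $G$, and the regularity of the rectangle $G$ finishes the estimate as in the first case.

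The step I expect to need the most care — and the only genuine departure from the proof of Proposition~\ref{prop-Sreg} — is checking, in the two-preimage case, that the pasted function lies in $\UU_f$ and not merely in $\UUt_f$, i.e.\ that $\liminf_{\combExt\ni y\to x}v(y)\ge f(x)$ also at the points $x\in I$. This is precisely where the normalization $0\le f\le1$ is used: $v\equiv1$ on an entire neighbourhood of $I$ in $\comb$ (as $I$ is far from $G$), and $1\ge f(x)$ there. Everything else — the case split, the semicontinuity of $\bar f$, the existence of the majorizing functions $h$, and the uses of the pasting lemma and of boundary regularity — is routine and essentially identical to what is already done in Propositions~\ref{prop-Sreg} and~\ref{prop-S-resolutive}.
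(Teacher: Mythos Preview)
Your proof is correct and follows essentially the same route the paper indicates, namely adapting the two-case argument of Proposition~\ref{prop-Sreg}. The only noteworthy differences are cosmetic: in Case~1 you pass through the upper envelope $\bar f$ and an $\eps$-approximation, whereas one can observe directly that $\bar f$ is continuous at $\Phi(x_0)$ (any sequence in $\Phi^{-1}(q_n)$ with $q_n\to\Phi(x_0)$ must converge to $x_0$ by compactness of $\bdyExt\comb$ and uniqueness of the preimage), so a majorant $h\in C(\bdy\comb)$ with $h(\Phi(x_0))=f(x_0)$ exists exactly as in the paper's proof of Proposition~\ref{prop-Sreg}; and your explicit verification that the pasted function lies in $\UU_f$ at points of $I$ is precisely the small extra check the paper leaves implicit.
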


The proof of this result is almost identical to the proof
of Proposition~\ref{prop-Sreg} above,
and we leave it to the interested reader to verify.
As in Proposition~\ref{prop-S-resolutive} this can be used
together with the comparison principle (Theorem~\ref{thm-comp})
to obtain the
$\EHp$-resolutivity for all $f \in C(\bdyExt \comb)$,
which however is merely a special case of our main result
(Theorem~\ref{thm-main}) below.

We will need the following consequence of 
Proposition~\ref{prop-Extreg}.

\begin{prop} \label{prop-Extreg-gen}
Let $x_0 \in \bdyExt \comb$
and let $f : \bdyExt \comb \to \R$ be a function which
is lower semicontinuous at $x_0$ and
 bounded  on $\bdyExt \comb$.
Then
\[ 
      \lim_{\combExt \ni y \to x_0} \EuHp f(y) 
      \ge  \lim_{\combExt \ni y \to x_0} \ElHp f(y) \ge f(x_0).
\] 
If $f$ is moreover continuous at $x_0$, then 
\begin{equation} \label{eq-Extreg-gen-cont}
      \lim_{\combExt \ni y \to x_0} \EuHp f(y) 
      = \lim_{\combExt \ni y \to x_0} \ElHp f(y) = f(x_0).
\end{equation}
\end{prop}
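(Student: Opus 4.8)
The plan is to squeeze $f$ from below near $x_0$ by a bounded continuous function and invoke the $\EHp$-regularity of $x_0$ furnished by Proposition~\ref{prop-Extreg}. Fix $\eps \in (0,1)$ and set $M = \sup_{\bdyExt \comb}|f| + 1$, so that $f(x_0) - \eps + M > 0$. Since $f$ is lower semicontinuous at $x_0$, there is $\delta > 0$ such that $f(x) > f(x_0) - \eps$ for all $x \in \bdyExt \comb$ with $\dist_{\rm Ext}(x,x_0) < \delta$, where $\dist_{\rm Ext}$ also denotes the extension of the metric to the completion $\overline{\combExt}$. Using that $x \mapsto \dist_{\rm Ext}(x,x_0)$ is continuous on $\bdyExt \comb$, I would let
\[
   g(x) = \max\Bigl\{-M,\ f(x_0)-\eps - (f(x_0)-\eps+M)\,\dist_{\rm Ext}(x,x_0)/\delta\Bigr\}.
\]
Then $g \in C(\bdyExt \comb)$ is bounded, $g(x_0) = f(x_0)-\eps$, $g \equiv -M$ where $\dist_{\rm Ext}(x,x_0) \ge \delta$, and in between $-M \le g \le f(x_0)-\eps$. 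Hence $g \le f$ on the $\delta$-ball (where $f > f(x_0)-\eps$) and $g = -M \le f$ outside it, so $g \le f$ on all of $\bdyExt \comb$.

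Now, since $g \le f$, simple comparison gives $\ElHp g \le \ElHp f$ in $\comb$. As $-g \in C(\bdyExt \comb)$, the $\EHp$-regularity of $x_0$ (Proposition~\ref{prop-Extreg}) applied to $-g$ yields
\[
   \lim_{\combExt \ni y \to x_0} \ElHp g(y) = -\lim_{\combExt \ni y \to x_0} \EuHp(-g)(y) = g(x_0) = f(x_0) - \eps .
\]
Consequently $\liminf_{\combExt \ni y \to x_0} \ElHp f(y) \ge f(x_0) - \eps$, and letting $\eps \to 0$ gives $\liminf_{\combExt \ni y \to x_0} \ElHp f(y) \ge f(x_0)$. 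Together with $\ElHp f \le \EuHp f$ from \eqref{eq-fund} this is the first chain of inequalities in the statement (read as inequalities between limits inferior).

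For the last assertion, assume moreover that $f$ is continuous at $x_0$. Then $-f$ is bounded and lower semicontinuous at $x_0$, so the part just proved, applied to $-f$, gives $\liminf_{\combExt \ni y \to x_0} \ElHp(-f)(y) \ge -f(x_0)$, i.e.\ $\limsup_{\combExt \ni y \to x_0} \EuHp f(y) \le f(x_0)$. Combining this with $\liminf_{\combExt \ni y \to x_0} \ElHp f(y) \ge f(x_0)$ and $\ElHp f \le \EuHp f$ forces
\[
   f(x_0) \le \liminf_{\combExt \ni y \to x_0} \ElHp f(y) \le \limsup_{\combExt \ni y \to x_0} \EuHp f(y) \le f(x_0),
\]
and likewise $\limsup_{\combExt \ni y \to x_0} \ElHp f(y) \le f(x_0)$; hence both $\ElHp f$ and $\EuHp f$ have limit $f(x_0)$ at $x_0$, which is \eqref{eq-Extreg-gen-cont}.

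I do not expect a serious obstacle: the construction of $g$ is routine once one uses the metric $\dist_{\rm Ext}$, and the rest is bookkeeping with simple comparison, Proposition~\ref{prop-Extreg} and \eqref{eq-fund}. The only subtlety worth flagging is that lower semicontinuity alone yields only the limit-inferior bound, so the genuine two-sided limit is available only under the continuity hypothesis and is obtained by running the argument for $f$ and for $-f$ and squeezing.
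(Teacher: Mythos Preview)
Your proof is correct and follows essentially the same route as the paper: squeeze $f$ from below by a continuous function touching (or nearly touching) $f$ at $x_0$, then use simple comparison and the $\EHp$-regularity from Proposition~\ref{prop-Extreg}, and finally apply the argument to $-f$ in the continuous case. The only cosmetic difference is that the paper asserts directly the existence of $h\in C(\bdyExt\comb)$ with $h\le f$ and $h(x_0)=f(x_0)$, whereas you construct an explicit $\eps$-approximation $g$ with $g(x_0)=f(x_0)-\eps$ and let $\eps\to 0$; your remark that only a $\liminf$ bound is obtained under mere lower semicontinuity is also apt.
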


\begin{proof}
(The proof  is similar to 
the corresponding result for standard Perron solutions,
see Proposition~7.1 in 
Bj\"orn--Bj\"orn--Shanmugalingam~\cite{BBS2}
or Theorem~10.29 in \cite{BBbook}.)
We can find a function $h \in C(\bdyExt \comb)$ such that
$h \le f$ on $\bdyExt \comb$ and $h(x_0)=f(x_0)$.
By simple comparison and the Ext-regularity obtained in
Proposition~\ref{prop-Extreg} we get that
\[
      \lim_{\combExt \ni y \to x_0} \EuHp f(y) 
      \ge \lim_{\combExt \ni y \to x_0} \ElHp f(y) 
      \ge \lim_{\combExt \ni y \to x_0} \EHp h(y) = h(x_0)=f(x_0).
\]
If $f$ is continuous at $x_0$ we apply this also to $-f$ to obtain
\eqref{eq-Extreg-gen-cont}.
\end{proof}

\section{The main result}
\label{sect-main}

The following is the main result of this paper,
and Theorem~\ref{thm-intro} is a special case of this result
since $\uHp f= \EuHp f$ if $f : \bdy \comb \to \eR$.

\begin{thm} \label{thm-main}
Let $f : \bdyExt \comb \to \eR$ be such that $f|_{\bdyP \comb} \in \Cbdd(\bdyP \comb)$.
Then
\[
      \EHp f = \SHp f.
\]
In particular, 
$f$ is $\EHp$-resolutive and $\EHp f$ is independent of
$f|_I$,
i.e.\ 
if $h : \bdyExt \comb \to \eR$ is such that
$h=f$ on $\bdyP \comb$, then
$
      \EHp h = \EHp f = \SHp h  = \SHp f.
$
\end{thm}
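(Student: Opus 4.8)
The plan is to show $\EuHp f \le \SuHp f$ and $\ElHp f \ge \SlHp f$, since the reverse inequalities $\SlHp f \le \ElHp f$ and $\SuHp f \le \EuHp f$ hold trivially by \eqref{eq-fund}, and combined with $\SHp$-resolutivity (Proposition~\ref{prop-S-resolutive}) this forces $\EuHp f = \SuHp f = \SlHp f = \ElHp f$, giving everything at once. The independence of $f|_I$ is then automatic: $\SHp f$ only depends on $f|_{\bdyP\comb}$ by definition, so if $h=f$ on $\bdyP\comb$ then $\EHp h = \SHp h = \SHp f = \EHp f$. So the whole theorem reduces to the single inequality $\EuHp f \le \SuHp f$ (the other one follows by applying it to $-f$, using $\ElHp f = -\EuHp(-f)$ and $\SlHp f = -\SuHp(-f)$).

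To prove $\EuHp f \le \SuHp f$, the natural idea is to take an arbitrary $u \in \UUt_f$ (a \p-superharmonic function on $\comb$, bounded below, with $\liminf_{y\to x} u(y) \ge f(x)$ for all $x \in \bdyP\comb$) and modify it near $I$ into a function in $\UU_f$ without increasing it much. Fix $\eps > 0$ and a constant $M$ with $|f| \le M$ on $\bdyP\comb$ (possible since $f|_{\bdyP\comb} \in \Cbdd$). The trouble is that $u$ may badly fail the boundary condition at points of $I$, where $f$ can be arbitrary, even $+\infty$. The key observation is that $I = (0,1] \times \{0\}$ is approached in $\combExt$ only through the "teeth region'' $(0,1) \times (0,2^{-k})$ for large $k$, i.e.\ through the small subcombs $\comb \setm \comb_k$. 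The plan is: since $\SuHp f$ is \p-harmonic (or $\equiv\pm\infty$, but here it is finite as $|\SuHp f| \le M$ by simple comparison with constants) and by Proposition~\ref{prop-Sreg} every $x \in \bdyP\comb$ is $S$-regular, $\SuHp f$ extends continuously to $\bdyP\comb$ with boundary values $f$. Now consider the competitor
\[
   w = \begin{cases}
      \min\{u,\ \SuHp f + \eps + 2M\chi\} & \text{on } \comb_k,\\
      u & \text{on } \comb \setm \comb_k,
   \end{cases}
\]
for a suitable auxiliary \p-superharmonic "barrier'' term; more honestly, the right approach is probably to work on $\comb_k$ directly: on $\comb_k$ the boundary $\bdyExt\comb_k$ splits into the part coming from $\bdyP\comb$ (where $u$ already dominates $f$, hence asymptotically dominates $\SuHp f$) and the "artificial'' part $\{(x_1,2^{-k}): 0<x_1<1\}$ inside $\comb$, where $u \ge \SuHp f - C$ fails control a priori. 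The fix is to use $\min\{u, \SuHp f + \eps\}$ on a slightly larger region and glue, invoking Pasting lemma~7.9 in~\cite{HeKiMa} as in the proof of Proposition~\ref{prop-Sreg}, Case~2, to get a \p-superharmonic function on all of $\comb$; then check it lies in $\UU_f$ by verifying the $\liminf$ inequality separately at points of $\bdyP\comb$ (where it equals $\min\{u,\SuHp f+\eps\}$ near the boundary and both $u$ and $\SuHp f$ have the right limits by $S$-regularity) and at points of $I$ (where, since we are deep in $\comb_k$, the function is $\le \SuHp f + \eps \le M + \eps$, while $f$ there is irrelevant because — wait, this is exactly the sticking point).

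The main obstacle, as the last remark shows, is handling the boundary condition at $I$ itself. At a point $x \in I$ we may have $f(x) = +\infty$, and then \emph{no} bounded-below modification of $u$ can satisfy $\liminf_{y\to x} u(y) \ge f(x) = +\infty$ — so $\UU_f$ as literally defined would be empty and $\EuHp f \equiv \infty$, contradicting the theorem. Hence I must have misread the mechanism: the resolution must be that $I$ is negligible for the \emph{extended} Perron problem in the sense that points of $I$ are \emph{not} actually accessible as limits $\combExt \ni y \to x$ in a way that constrains \p-superharmonic functions — more precisely, the relevant fact is that $\EuHp f$ is defined via $\liminf$ over $y \to x$ in the $\dist_{\Ext}$-completion, and one must show that imposing the condition only on $\bdyP\comb = \bdyExt\comb \setm I$ already determines $\EuHp f$. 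Concretely the plan is: (i) show $\SuHp f \in \UU_{f'}$ up to $\eps$, where $f'$ agrees with $f$ on $\bdyP\comb$ and is $-\infty$ on $I$ — i.e.\ $\SuHp f + \eps \in \UU_f$ whenever $f|_I \equiv -\infty$, giving $\EuHp f \le \SuHp f$ in that case by a direct barrier/comparison argument at the (now harmless) points of $I$; this uses that $\SuHp f$ is bounded and that near $I$ one can dominate by a \p-superharmonic bump on $G = (2^{-k},2^{1-k})\times(0,1)$ extended by a constant, exactly as in Proposition~\ref{prop-Sreg}; and then (ii) observe by simple comparison that $\EuHp f$ is \emph{monotone} in $f|_I$ and that decreasing $f|_I$ to $-\infty$ cannot change it — for the \emph{upper} solution this is the crux, and it should follow because $\UU_f$ requires $u$ bounded below, so if $u \in \UU_{f}$ with $f|_I$ arbitrary then also $u \in \UU_{f'}$, giving $\EuHp f \ge \EuHp f'$, while conversely a competitor for $f'$ can be pushed up by $\eps$ plus a barrier concentrated near $I$ to dominate $f$ on $I$ \emph{only if} $f|_I$ is bounded — so in fact the honest statement being used is that $I$ carries \p-capacity (more precisely $\bCp$) zero, equation~\eqref{eq-bCpI}, and the whole modification-near-$I$ step is the standard capacity-zero invariance argument of~\cite{BBSdir} adapted to the extended boundary. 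So the real plan is: reduce to $\EuHp f \le \SuHp f$; use $S$-regularity (Prop.~\ref{prop-Sreg}) and $\SHp$-resolutivity (Prop.~\ref{prop-S-resolutive}) to get that $\SuHp f$ has continuous boundary values $f$ on $\bdyP\comb$; build the competitor $\min\{u, \SuHp f + \eps\}$ pasted with $u$ outside a neighbourhood of $\bdyP\comb$, check it is in $\UU_f$ using $\bCp(I,\comb)=0$ to neutralise the (at most capacity-zero) set $I$ exactly as in the invariance theorem of~\cite{BBSdir}; conclude $\EuHp f \le \SuHp f + \eps$, let $\eps \to 0$, and apply the same to $-f$. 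The genuinely hard step is verifying that the pasted competitor still satisfies the $\liminf$ bound at \emph{all} of $\bdyExt\comb$ including $I$ — this is where the smallness of $I$ (its $\bCp$-negligibility, \eqref{eq-bCpI}) must be invoked, and getting that bookkeeping right, rather than any single estimate, is the substance of the proof.
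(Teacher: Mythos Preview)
Your reduction to the single inequality $\EuHp f \le \SuHp f$ is correct and is how the paper proceeds. But the mechanism you propose for this inequality has a genuine gap, and the difficulty is located at a different point than you think.

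You correctly sense that $\bCp(I,\comb)=0$ (equation~\eqref{eq-bCpI}) is relevant. However, the invariance theorem in \cite{BBSdir} that you invoke requires the unperturbed boundary function to be continuous (or at least quasicontinuous) on the \emph{whole} boundary. If $f|_{\bdyP\comb}\in\Cunif(\bdyP\comb)$, i.e.\ it extends continuously across $I$, then your plan works and recovers precisely the case already treated in \cite{BBSdir}. The point of Theorem~\ref{thm-main} is to go from $\Cunif$ to $\Cbdd$, and the only obstruction sits at the single accessible point $0$: to run the comparison principle (Theorem~\ref{thm-comp}) between $\EuHp f$ and $\SuHp f$ you would need $\limsup_{y\to 0}\EuHp f(y)\le f(0)$, but Proposition~\ref{prop-Extreg-gen} does not give this because $f$ need not be upper semicontinuous at $0$ as a function on $\bdyExt\comb$. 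Your pasting $\min\{u,\SuHp f+\eps\}$ does not help either: at $x\in I$ with $f(x)$ large (or $+\infty$) the pasted function fails the $\liminf$ bound, and you cannot fix this by adding a barrier supported near $I$, because for $p\ne 2$ the sum of two \p-superharmonic functions is not \p-superharmonic. Your step~(i) does correctly handle the special case $f|_I\equiv -\infty$, but your step~(ii) then needs $\EuHp f\le \EuHp f'$, which is the nontrivial direction of the very invariance you are trying to prove.

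The paper's device is to isolate the trouble at $0$ and treat it as a \emph{single-point} perturbation. After normalising so $0\le f\le 2$ on $\bdyP\comb$ and $f(0)=1$, set $k=f$ on $\bdyP\comb$ and $k=1$ on $I$ (so $k$ is continuous at $0$), and $\tilde k=k+\chi_{\{0\}}$. The bump forces any $u\in\UUt_{\tilde k}$ to satisfy $\liminf_{y\to 0}u(y)\ge 2\ge\limsup_{y\to 0}\EuHp f(y)$, so Theorem~\ref{thm-comp} gives $u\ge\EuHp f$; taking the infimum over $u$ yields $\EuHp f\le\EuHp\tilde k$. Removing the bump is now a one-point perturbation at the regular exterior-ray point $0$, handled by Theorem~6.3 in \cite{ABjump} (for $p>2$) or Theorem~6.1 in \cite{BBS2} (for $p\le 2$): comparing $\tilde k$ with a continuous $\psi\ge k$ having $\psi(0)=1$ gives $\limsup_{y\to 0}\EuHp\tilde k(y)\le 1$, and a second use of Theorem~\ref{thm-comp} then yields $\EuHp\tilde k\le\SHp f$. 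In the unbounded case the paper does use $\bCp(I,\comb)=0$ via \cite{BBSdir}, but on auxiliary domains $\combG$ and applied to a \emph{continuous} comparison function $\phi$, not to $f$ itself; this is only needed to control $\limsup_{y\to x}\EuHp f(y)$ at points $x\in\bdyP\comb\setm\{0\}$ when $f|_I$ is unbounded.
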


In the special case when $f$ is bounded on $\bdyExt \comb$
and continuous at $0$ also from $I$, 
this can be deduced directly from 
the comparison principle (Theorem~\ref{thm-comp})
and Proposition~\ref{prop-Extreg-gen}.
Hence it is to allow for a discontinuity at $0$ that we need to 
work harder.

To prove this we will use a number of results which are
available to us for the comb, but not in more general situations. 
Let us mention the key ingredients which are not generally available,
but first we need  some more terminology.

A boundary point $x_0 \in \bdy \Om$ is \emph{semiregular} (with respect to a 
domain $\Om$)
if $x_0$ is irregular and the limit
\[
      \lim_{\Om \ni y \to x_0} \Hpind{\Om} f(y)
      \quad \text{exists for all } f \in C(\bdy \Om).
\]
An open set is \emph{semiregular} if all its boundary
points are either regular or semiregular.
There are two types of irregular boundary points,
semiregular and strongly irregular boundary points,
with very different behaviour,
see Bj\"orn~\cite{ABclass}.

\begin{enumerate}
\item
We will need the comparison principle (Theorem~\ref{thm-comp})
on $\bdyP \comb$. 
It needs further investigation to see which sets this can be extended to,
see Estep--Shanmugalingam~\cite{ES}.
\item 
We will use that all boundary points are regular. 
However, if there are also some semiregular boundary points
it should be possible to combine the techniques 
for proving the comparison principles in 
Theorem~3.1 in Bj\"orn~\cite{ABjump} and in Theorem~\ref{thm-comp}
to obtain a suitable comparison principle enabling
the proof of Theorem~\ref{thm-main} in such a case.

The situation resembles the one when proving that
bounded semicontinuous functions are resolutive, 
see the discussion in the introduction,
although for our new result semiregularity should be possible to handle, whereas
strong irregularity is still a serious obstacle.
\item
For $p>2$ we will also need Theorem~6.3 in \cite{ABjump},
a result which is only available in unweighted $\R^n$.
Here we apply it for the point $0$ and to be able to do so we
need to know that $0$ is an exterior ray point.
\item
In the unbounded case (i.e.\ when $f$ is allowed to be unbounded on $I$)
we will also need a recent result from
Bj\"orn--Bj\"orn--Shanmugalingam~\cite{BBSdir}.
\end{enumerate}

As there are some  extra complications
to obtain this result in the unbounded case
we first give a proof for the bounded case.
It should also be said that
Theorem~\ref{thm-main} is a special case of
Theorem~\ref{thm-comb-last} below
(the role of  $f$ in Theorem~\ref{thm-main} is taken by
$h$ in Theorem~\ref{thm-comb-last}).
However, as the proof of Theorem~\ref{thm-comb-last}
is substantially more involved we prefer to give a 
direct proof of Theorem~\ref{thm-main} here.

\begin{proof}(For bounded $f$.)
Assume, without loss of generality, 
that $0 \le f \le 2$ and that $f(0)=1$.
Let 
\[
    k=\begin{cases}
        f & \text{on } \bdyP \comb, \\
        1 & \text{on } I,
      \end{cases}
      \quad \text{and} \quad
    \ktilde=k+\chi_{\{0\}}.
\]
Let also $u \in \UUt_{\ktilde}$.
Then
\[
      \liminf_{\comb \ni y \to 0} u(y) \ge \ktilde(0)=2
        \ge \limsup_{\comb \ni y \to 0} \EuHp f(y).
\]
Since $f \in \Cbdd(\bdyP \comb)$,  Proposition~\ref{prop-Extreg-gen}
shows that
\[
      \lim_{\combExt \ni y \to x} \EuHp f(y)
     =f(x)
    \le \liminf_{\combExt \ni y \to x} u(y)
     \quad \text{for all } x \in \bdyP \comb \setm \{0\}.
\]
Thus, the comparison principle (Theorem~\ref{thm-comp}) yields that
$u \ge \EuHp f$.
As this is true for all  $u \in \UUt_{\ktilde}$, we obtain 
that $\EuHp \ktilde \ge \EuHp f$.

Next, 
since $k$ is continuous at $0$, 
we can find $\psi \in C(\bdy \comb)$ such that
$\psi(0)=1$ and $k \le \psi$ on $\bdyExt \comb$.
(Note that $\psi$ is a function on $\bdy \comb$.)
Let also $\psit=\psi+\chi_{\{0\}}$.
Then $\Hp \psit = \Hp \psi$ by 
either Theorem~6.3 in Bj\"orn~\cite{ABjump} (if $p>2$)
or 
Theorem~6.1 in Bj\"orn--Bj\"orn--Shanmugalingam~\cite{BBS2} (if $p \le 2$)
(which can also be found as Theorem~10.29 in \cite{BBbook};
the more general Theorem~9.1 in 
Bj\"orn--Bj\"orn--Shanmugalingam~\cite{BBSdir} can also be used). 
By simple comparison,
\[
     \EuHp \kt \le \Hp \psit = \Hp \psi.
\]
Thus using also regularity (for $\psi$) 
and $S$-regularity (for $f$), see Proposition~\ref{prop-Sreg}, we see that
\[
      \limsup_{\comb \ni y \to 0} \EuHp \ktilde(y) 
      \le \lim_{\comb \ni y \to 0} \Hp \psi(y) = 1
      = \lim_{\comb \ni y \to 0} \SHp f(y).
\]
Moreover, by  Proposition~\ref{prop-Extreg-gen} and
the $S$-regularity again
we get that
\[
      \lim_{\combExt \ni y \to x} \EuHp \ktilde(y)
     =f(x)
     =       \lim_{\combExt \ni y \to x} \SHp f(y)
     \quad \text{for all } x \in \bdyP \comb \setm \{0\}.
\]
Using the comparison principle (Theorem~\ref{thm-comp}) 
we obtain that $\EuHp f \le \EuHp \ktilde \le \SHp f$.
Applying this also to $-f$ yields
\[
   \SHp f = - \SHp (-f) \le \ElHp f \le \EuHp f \le \SHp f.
   \qedhere
\]
\end{proof}

\begin{proof}(The general case.)
Assume, without loss of generality, 
that $0 \le f \le 2$ on $\bdyP \comb$ and that $f(0)=1$.
Let 
\[
    k=\begin{cases}
        f & \text{on } \bdyP \comb, \\
        1 & \text{on } I,
      \end{cases}
      \quad \text{and} \quad
    \ktilde=k+\chi_{\{0\}}.
\]
Let also $u \in \UUt_{\ktilde}$.
We want to show that $u \ge \EuHp f$.
To do so is a fair bit more involved in the unbounded case than in
the bounded case. 
However, once this has been achieved we can proceed exactly as in the 
bounded case.

Fix $x \in \bdyP \comb \setm \{0\}$ for the moment.
Let $m$ be a positive integer such that $2^{1-m} < |x|$.
Let also $G=((-1,1)  \times (0,2)) \setm \bigcup_{j=0}^m \itoverline{I}_j$,
and let $d_{G}$ be the Mazurkiewicz distance with respect to $G$. 
We equip $\comb$ with the distance $d_{G}$ and call this
space $\combG=(\comb,d_{G})$.
Taking the completion of this space we obtain $\bdy_{\itoverline{G}^M} \comb$, in a similar
way as when we obtained $\bdyExt \comb$.
(This time only the points in $I_j$, $j=0,\ldots,m$, are doubled,
whereas to each point in $\bigcup_{j=m+1}^\infty I_j$ there is just one
corresponding point in $\bdy_{\itoverline{G}^M} \comb$.
The notation follows Bj\"orn--Bj\"orn--Shanmugalingam~\cite{BBSdir}.)

Next we find $\phi \in C(\bdy_{\itoverline{G}^M} \comb)$ such that
$k \le \phi \le 2$ and $\phi(x)=k(x)=f(x)$.
Let also $\phit = \phi + \infty \chi_I$.
By Theorem~11.2  
in \cite{BBSdir} together with \eqref{eq-bCpI},
we see that $P_{\combG} \phit = P_{\combG} \phi$.
Thus, by simple comparison,  we obtain that
\begin{equation} \label{eq-EuHpf}
    \EuHp f \le \EuHp \phit = P_{\combG} \phit = P_{\combG} \phi  \le 2.
\end{equation}
That $x$ is a regular boundary point with respect to $\combG$ is shown
as in Proposition~\ref{prop-Sreg}. Using this we find that
\[
      \lim_{\combExt \ni y \to x} \EuHp f(y)
     \le \lim_{\combG \ni y \to x} P_{\combG} \phi(y)
     = \phi(x) =f(x)
    \le \liminf_{\combExt \ni y \to x} u(y).
\]
Since $x \in \bdyP \comb \setm \{0\}$ was arbitrary we have thus  shown that
\[
      \lim_{\combExt \ni y \to x} \EuHp f(y)
    \le \liminf_{\combExt \ni y \to x} u(y)
    \quad \text{for all } x \in \bdyP \comb \setm \{0\}.
\]
Moreover, using \eqref{eq-EuHpf} again, we see that
\[
      \lim_{\comb \ni y \to 0} \EuHp f(y)
      \le 2 
      \le  \liminf_{\comb \ni y \to 0} u(y).
\]
Hence, the comparison principle (Theorem~\ref{thm-comp}) yields that
$u \ge \EuHp f$.
As already mentioned, the rest of the proof is exactly as in the bounded case.
\end{proof}

\section{Functions with jumps}
\label{sect-jumps}

In this section we go one step further and combine the technique above with 
the technique 
in Bj\"orn~\cite{ABjump} to deduce the following result.

\begin{thm} \label{thm-comb-last}
Let $E \subset \bdyP \comb \setm \{0\}$ be a countable set.
Assume that $f : \bdyExt \comb \to \R$ is bounded and that $f|_{\bdyP \comb}$
is continuous at all points in $\bdyP \comb  \setm E$ and has jumps at all
points in $E$.

Let  $h: \bdyExt \comb \to \eR$ be such that
$h=f$ on $\bdyP \comb \setm \Et$, where $\Et \subset \bdyP \comb$ and
\begin{equation}  \label{eq-CpEt}
     \begin{cases}
     \Cp(\Phi(\Et))=0, & \text{if\/ } 1 <p \le 2, \\
     \Et \text{ is countable}, & \text{if } p >2.
     \end{cases}
\end{equation}
Then both $f$ and $h$ are $\EHp$- and $\SHp$-resolutive, and
\[
               \EHp h = \EHp f = \SHp h = \SHp f.
\]
\end{thm}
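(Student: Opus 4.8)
The plan is to reduce Theorem~\ref{thm-comb-last} to the main result (Theorem~\ref{thm-main}) combined with the jump‑invariance machinery from \cite{ABjump}, handled one point at a time and then summed over the countable sets $E$ and $\Et$. First I would dispose of the point $0$: since $f|_{\bdyP\comb}\in\Cbdd(\bdyP\comb)$ (it is bounded and continuous away from the countable set $E\subset\bdyP\comb\setm\{0\}$, hence in particular continuous at $0$), Theorem~\ref{thm-main} already tells us that $\EHp f=\SHp f$ and that these are unaffected by the values on $I$. So throughout we may work on $\bdyP\comb$ with the special Perron solutions $\SHp$, and the task becomes: show $\SHp$‑resolutivity of $f$ and $h$ and that $\SHp h=\SHp f$, where $f$ has jumps on the countable set $E$ and $h$ differs from $f$ on a set $\Et$ satisfying \eqref{eq-CpEt}.

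The main tool on the $\bdyP\comb$ side is the comparison principle (Theorem~\ref{thm-comp}) together with the $S$‑regularity of every point of $\bdyP\comb$ (Proposition~\ref{prop-Sreg}), exactly as in the proof of Proposition~\ref{prop-S-resolutive}. The strategy for a single jump point $x_0\in E$ mirrors the bounded case of Theorem~\ref{thm-main}: one squeezes $\SuHp f$ between two auxiliary solutions obtained by replacing $f(x_0)$ by the one‑sided limits (or by $f(x_0)+\infty\chi_{\{x_0\}}$ and $f(x_0)-\infty\chi_{\{x_0\}}$ suitably truncated), using that a jump discontinuity at a single $S$‑regular point does not change the Perron solution. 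For $p\le2$ this one‑point jump invariance should follow from the techniques behind Theorem~6.1 in \cite{BBS2} / Theorem~6.3 in \cite{ABjump} transplanted to $\combExt$ with the prime end boundary — the ingredients are all $S$‑regularity and the comparison principle, both available. For $p>2$ one invokes Theorem~6.3 in \cite{ABjump} at the point $x_0$, which requires $x_0$ to be an exterior ray point; every point of $\bdyP\comb$ is of this type by the construction of $\combExt$, so this is fine. To pass from one jump point to countably many, one orders $E=\{x_1,x_2,\ldots\}$ and runs the standard telescoping/summation argument from \cite{ABjump}: write $f$ as a uniformly convergent sum of a continuous function plus "bump" pieces supported near each $x_i$, apply the single‑point result termwise, and use the stability of Perron solutions under uniform limits of boundary data (available here via the comparison principle and boundedness). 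The same argument, applied now to the perturbation set $\Et$: for $p>2$, $\Et$ is countable and we repeat the countable‑jump reasoning; for $p\le2$, $\Cp(\Phi(\Et))=0$ and we invoke the capacity‑based invariance (Theorem~6.1 in \cite{BBS2}, or Theorem~9.1 / Theorem~11.2 in \cite{BBSdir} together with an estimate like \eqref{eq-bCpI}) to get $\SHp h=\SHp f$ and resolutivity of $h$.

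Assembling the pieces: (i) reduce to $\SHp$ via Theorem~\ref{thm-main}; (ii) prove single‑point jump invariance on $\bdyP\comb$ using $S$‑regularity + Theorem~\ref{thm-comp} (splitting $p\le2$ vs.\ $p>2$ as in \cite{ABjump}); (iii) sum over $E$ to get $\SHp$‑resolutivity of $f$; (iv) handle $\Et$ by the countability case (iterating (ii)) when $p>2$, and by the $\Cp$‑null invariance results of \cite{BBS2,BBSdir} when $p\le2$, to get $\SHp h=\SHp f$ and resolutivity of $h$; (v) transfer back to $\EHp$ via Theorem~\ref{thm-main} (or \eqref{eq-fund}) to conclude $\EHp h=\EHp f=\SHp h=\SHp f$.

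I expect the main obstacle to be step~(ii)–(iii) in the noncompact setting: the proofs in \cite{ABjump} are written for the standard (compact) boundary, and one must check that the key steps — the comparison principle, the $S$‑regularity at each jump point, and especially the uniform‑convergence argument used to sum countably many jumps — all survive on $\combExt$ with the noncompact prime end boundary $\bdyP\comb$. The noncompactness is exactly what causes the pathologies in Example~\ref{ex-1}, so one must stay within $\Cbdd(\bdyP\comb)$ throughout and verify that the "bump" decomposition of $f$ and the termwise estimates respect the boundedness bound uniformly; keeping the $L^\infty$ bounds under control while telescoping over $E$ is the delicate point. Once that is in place, the combination with the $\Cp$‑null (resp.\ countable) perturbation results for $\Et$ is essentially bookkeeping, and the final identification of $\EHp$ and $\SHp$ is immediate from Theorem~\ref{thm-main}.
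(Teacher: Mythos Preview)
Your reduction step~(i) fails at the outset. You claim $f|_{\bdyP\comb}\in\Cbdd(\bdyP\comb)$ and invoke Theorem~\ref{thm-main}, but the whole point of the hypothesis is that $f|_{\bdyP\comb}$ has genuine jump discontinuities on the nonempty set $E$; hence $f|_{\bdyP\comb}\notin\Cbdd(\bdyP\comb)$ and Theorem~\ref{thm-main} does not apply to $f$ (nor to $h$). Since your plan routes everything through this reduction, the rest of the argument has no foundation. In particular, you cannot ``work on $\bdyP\comb$ with the special Perron solutions'' for $f$ without first knowing $\EHp f=\SHp f$, and that identity is part of what must be proved.

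The paper does not reduce to Theorem~\ref{thm-main}; it works directly with the extended Perron solutions $\EuHp$, $\ElHp$ and establishes, for every $x\in\bdyP\comb\setm\{0\}$, that $\EuHp f$, $\ElHp f$, $\EuHp h$, $\ElHp h$ all have the \emph{same} corner-type boundary behaviour near $x$ (expressed via the affine-in-angle function $U_x$ of Theorem~5.2 in \cite{ABjump}). For $p\le2$ this comes from Theorem~7.2 in \cite{ABjump}, applied on the auxiliary spaces $\combG$ so that the capacity-null invariance from \cite{BBSdir} can absorb both $I$ and $\Et$. For $p>2$ there is no such capacity tool, and a separate key lemma is needed: one builds by induction an increasing sequence $k_j$ with $k_{j+1}-k_j\in C(\bdyExt\comb)$, each step raising the value at one point of $\Et$ by $1$ while increasing $\EuHp k_j(z_0)$ by at most $2^{-j}\eps$; the Harnack limit $v$ is then a competitor in $\UU_h$, forcing $\EuHp h\le\EuHp k$. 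This is not a ``telescoping/summation of bumps under uniform convergence''---the operator is nonlinear, so one cannot add solutions---and the increments in boundary data are not uniformly small but of size $1$ each time. With the boundary limits in hand at every $x\ne0$, the point $0$ is treated separately (via a continuous majorant $\psi$ and Theorem~6.3 in \cite{ABjump} or \cite{BBS2}), and a single application of the comparison principle (Theorem~\ref{thm-comp}) then yields all four equalities at once; the $\SHp$ identities drop out from \eqref{eq-fund}.
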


Here $\Cp$ is the Sobolev capacity on $\R^2$, see
p.\ 48 in Heinonen--Kilpel\"ainen--Martio~\cite{HeKiMa}.
(Recall also that $\Phi: \bdyExt \comb \to \bdy \comb$ is the natural map.)
By saying that $f$ has a jump at $x \in \bdyP \setm \{0\}$ 
we mean that it has limits
from the two directions along the boundary, but
these limits need not be the same, neither do we impose
any condition on the relation between these limits and the value $f(x)$.

\begin{remark} \label{rmk-mods}
We need to use modifications of Theorems~5.2 and~5.4 in Bj\"orn~\cite{ABjump}
for the $\EHp$-Perron solutions,
and the proofs therein  directly generalize to this
situation.
We also need to use Theorem~5.2 in \cite{ABjump}
for jumps at the tips
of the comb's teeth, where the 
angle is $2 \pi$. 
Indeed, in the proof therein
we should see $\Omt$ as a Riemann surface, on whose closure
we consider Perron solutions (the theory being the same to the small
extent used in the proof).
When applying Lemma~7.28 in Heinonen--Kilpel\"ainen--Martio~\cite{HeKiMa},
it is easy to deduce that $\vt$ is \p-subharmonic in $\Om$ as well as in
$\Omt$, which is enough for the rest of the proof.
(It also follows that the other results 
in \cite{ABjump} are true also for asymptotic corner points with
angle $2\pi$.)
\end{remark}

For $p>2$ we also need the following key lemma.
(For simplicity we use some obvious complex notation.)

\begin{lem} \label{lem-key} 
Assume that $p>2$ and that $f$, $h$, $E$ and $\Et$ are as in 
Theorem~\ref{thm-comb-last}.
Let $x_0 \in \bdyP \comb$.

If $x_0 \ne 0$, then 
we let 
\[
    U(x_0+\reith)=A_1+(A_2-A_1)\frac{\theta-\alp_1}{\alp_2-\alp_1}
    \quad \text{for } r>0 \text{ and }\alp_1 < \theta < \alp_2,
\]
where  $\alp_1<\alp_2 \le \alp_1 + 2\pi$ are the two
directions of $\bdyP \comb$ near $x_0$ chosen so that
$U$ is defined in a neighbourhood of $x_0$ in $\combExt$,
and $A_j=\lim_{t \to 0\limplus} f(x_0+te^{i\alp_j})$, $j=1,2$.

If $x_0=0$
\textup{(}when we do not just have two directions\/\textup{)}
 we instead let $A_1=A_2=U(x)=f(x_0)$ for all $x \in \R^2$.

Then
\begin{equation} \label{eq-lim-gen-p>2}
    \lim_{\combExt\ni z \to x_0} {(\ElHp h(z) - U(z))}=
    \lim_{\combExt \ni z \to x_0} {(\EuHp h(z) - U(z))}=0.
\end{equation}
\end{lem}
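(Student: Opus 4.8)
The plan is to reduce the statement to the already-known boundary regularity and jump results by localising near $x_0$ and replacing $h$ by a cleverly chosen comparison function. First I would dispose of the easy case $x_0 \in \bdyP \comb$ with $x_0$ a point of continuity of $f|_{\bdyP \comb}$: there $U$ is the constant $f(x_0)$, and since $h=f$ outside $\Et$ with $\Cp(\Phi(\Et))=0$ when $p\le2$ (respectively $\Et$ countable when $p>2$), the invariance results quoted in the excerpt (the $\EHp$-analogues of Theorems~6.3 and 6.1 in \cite{ABjump} and \cite{BBS2}, valid also for asymptotic corner points of angle $2\pi$ by Remark~\ref{rmk-mods}) give $\EHp h = \EHp f$ near $x_0$, whence \eqref{eq-lim-gen-p>2} follows from Proposition~\ref{prop-Extreg-gen} applied to $f$. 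So the real content is the case $x_0 \in E$ (a genuine jump point, $x_0 \ne 0$) and the case $x_0 = 0$.

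For a jump point $x_0 \in E$, I would work in the neighbourhood of $x_0$ in $\combExt$, which is (a piece of) a sector of opening angle $\alp_2-\alp_1 \le 2\pi$ — in the worst case exactly $2\pi$, at a tooth tip, which is why the Riemann-surface interpretation in Remark~\ref{rmk-mods} is needed. The function $U$ as defined is the harmonic function on that sector interpolating linearly in the angular variable between the two one-sided limits $A_1, A_2$; the key point is that $U$ is continuous up to $\bdyP \comb$ near $x_0$ with the correct one-sided boundary limits, and is \p-harmonic there (it depends only on $\theta$, so $|\nabla U|^{p-2}\nabla U$ is divergence-free). The plan is then to run the comparison-principle argument of Theorems~5.2 and~5.4 in \cite{ABjump} (in the $\EHp$-form, which generalises directly): sandwich $\EuHp h$ and $\ElHp h$ between $U\pm\eps+$ (a small multiple of a barrier that blows up away from $x_0$), using that all of $\bdyP \comb$ is $\EHp$-regular (Proposition~\ref{prop-Extreg}), that $f$ is bounded, and that the perturbation set $\Et$ is $\Cp$-negligible (or countable) — this is exactly where condition \eqref{eq-CpEt} enters, controlling the finitely/countably many points where $h \ne f$. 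The extended comparison principle (Theorem~\ref{thm-comp}), now available on all of $\bdyP \comb$, then forces the two limits in \eqref{eq-lim-gen-p>2} to agree and to equal $0$.

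The case $x_0 = 0$ is where I expect the main obstacle, and it is the reason this lemma is stated separately for $p>2$. Here there are not merely two directions but the whole inaccessible segment $I$ is being approached through the infinitely many teeth accumulating at $0$, so one cannot build $U$ as an angular interpolant; instead $U$ is just the constant $f(0)$, and one must show $\EuHp h(z) \to f(0)$ and $\ElHp h(z)\to f(0)$ as $z\to 0$ in $\combExt$. The plan is to reuse the machinery from the proof of Theorem~\ref{thm-main}: exhaust $\comb$ by the subdomains $\comb_k$, observe that $0$ is an exterior ray point so that Theorem~6.3 in \cite{ABjump} applies to remove any bad behaviour of $h$ at the single point $0$, and then estimate $\EuHp h$ from above by $P_{\combG}\phi$ for a continuous majorant $\phi$ with $\phi(0)=f(0)$ (as in \eqref{eq-EuHpf}), using \eqref{eq-bCpI} and Theorem~11.2 in \cite{BBSdir} to kill the contribution of $I$; the lower bound for $\ElHp h$ is symmetric. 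The delicate part is bookkeeping the two separate perturbations at once — the jump set $E$ scattered over $\bdyP \comb$ and the negligible set $\Et$ — while the limit is taken at $0$, which is neither in $E$ nor (by assumption) in $\Et$; one has to arrange the continuous comparison functions $\phi$ to dominate $h$ globally on $\bdyExt\comb$ while pinning $\phi(0)=f(0)$, and to invoke the $p$-dependent invariance result only at $0$ and the jump results only near points of $E$. Once both one-sided estimates are in place, letting $\eps\to0$ gives \eqref{eq-lim-gen-p>2} at $x_0=0$, completing the proof.
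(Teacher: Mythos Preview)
Your plan has a fundamental gap in the $p>2$ regime, which is the only regime the lemma addresses. When $p>2$ in $\R^2$, single points have positive $\Cp$-capacity, so a countable set $\Et$ is \emph{not} $\Cp$-negligible, and none of the invariance results you invoke (Theorem~6.1 in \cite{BBS2}, Theorem~11.2 in \cite{BBSdir}, or their $\EHp$-analogues) apply to remove the perturbation on $\Et$. The only tools available for $p>2$ are Theorems~5.2, 5.4 and~6.3 in \cite{ABjump}, and those handle one point at a time, not a countable set. Thus your sentence ``using \ldots\ that the perturbation set $\Et$ is $\Cp$-negligible (or countable) --- this is exactly where condition~\eqref{eq-CpEt} enters'' is precisely where the argument breaks: countability by itself buys you nothing here, and the ``easy'' continuity-point case is not easy either, since $\EHp h = \EHp f$ is essentially what Theorem~\ref{thm-comb-last} is trying to prove \emph{using} this lemma. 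The same issue recurs at $x_0=0$: \eqref{eq-bCpI} and Theorem~11.2 in \cite{BBSdir} kill the contribution of $I$, but not of $\Et$.

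The paper's proof circumvents this by an inductive construction that bootstraps the single-point results to a countable set. Fixing $z_0 \in \comb$ and $\eps>0$, and enumerating $\Et$ so that each point recurs infinitely often, it builds an increasing sequence of bounded functions $k_j$ on $\bdyExt \comb$, each differing from the previous by a continuous bump, such that $k_j \to \infty$ on $\Et$ while $\EuHp k_j(z_0)$ increases by at most $2^{-j}\eps$ at each step. The single-point jump results from \cite{ABjump} are used at each inductive step to show $\EuHp(k_j + 2\chi_{E_j}) = \EuHp k_j$ via the comparison principle (Theorem~\ref{thm-comp}), which is what permits the small increment. Harnack's convergence theorem then gives a \p-harmonic limit $v \in \UU_h$ with $v(z_0) \le \EuHp k(z_0) + 2\eps$, whence $\EuHp h \le \EuHp k$; and since $k$ has only one bad point (namely $x_0$), the results of \cite{ABjump} apply to $k$ directly to give the required limit. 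The paper even flags that this induction is needed already when $\Et$ is a single point, because it is not known whether $\EuHp k = \lim_{m \to \infty} \EuHp \min\{k,m\}$ in general --- so one cannot simply truncate $h$ and pass to the limit.
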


If $x_0$ is a tip point we have $\alp_2=\alp_1+2\pi$ and we
should interpret the statement above using a Riemann surface
as in Remark~\ref{rmk-mods}.

Note that in general it is not known
if $\EuHp k = \lim_{m \to \infty} \EuHp \min\{k,m\}$,
which makes it necessary to use induction in the proof below
even in the case when $\Et$ is just one point.

\begin{proof}
Without loss of generality assume that 
$A_1=0 \le A_2$ and that $x_0 \in \Et = E \cup \{0\}$.
We can find a nonnegative bounded function $k: \bdyExt \comb \to \R$ such that
\begin{enumerate}
\item
$k \ge f$ on $\bdyP \comb$;
\item
$k(x)=k(0)$ for $x \in I$;
\item 
$k$ is continuous at all points in $\bdyP \comb \setm \{x_0\}$;
\item 
$k$ is lower semicontinuous at all points in $I$;
\item
if $x_0 \ne 0$, then 
$k$ has a jump at $x_0$ with limits $0$ and $A_2$
and $k(x_0) = \sup_{\bdyP \comb} k$;
\item
while if $x_0 = 0$, 
we require that 
$
     k(0)=\lim_{\bdyP \comb \ni y \to 0} k(y)=0.
$
\end{enumerate}
Note in particular that $k$ is upper semicontinuous at $x_0$.
If  $x_0=0$, then $k$ is even continuous at $x_0$
(but it need not be continuous at the points in $I$).

Let $z_0 \in \comb$ and $\eps>0$.
Let also $\{y_j\}_{j=0}^\infty$ be a sequence of
points in $\Et$ such that each point in $\Et$
appears infinitely many times.
We want to construct an increasing  sequence $\{k_j\}_{j=0}^\infty$
of bounded functions on $\bdyExt \comb$ such that $k_0=k$ and 
for each nonnegative integer $j$,
\begin{enumerate}
\renewcommand{\theenumi}{\textup{(\roman{enumi})}}%
\item 
$k_{j+1} - k_j \in C(\bdyExt \comb)$;
\item 
$ k_j \le k_{j+1} \le k_j +1$;
\item
$\EuHp k_{j+1}(z_0) \le \EuHp k_{j}(z_0) +2^{-j} \eps$;
\item 
$k_{j+1}(y_j)=k_j(y_j)+1$;
\item
$k_{j+1}(x)=k_{j+1}(0)$ for $x \in I$.
\end{enumerate}

We proceed by induction and assume that $k_j$ has been constructed
for some nonnegative integer $j$.
(The initial step is of course to let $k_0=k$.)
Let 
\[
	\kt_j =k_j + 2\chi_{E_j},
        \quad \text{where }
        E_j=\begin{cases}
          \{y_j\}, & \text{if } y_j \ne 0, \\
          \itoverline{I}, & \text{if } y_j = 0.
          \end{cases}
\]
We want to use the 
comparison principle (Theorem~\ref{thm-comp}) to 
show that $\EuHp \kt_j = \EuHp k_j$.
To do so we need to establish that
\begin{equation} 
  \label{eq-kt-k}
\lim_{\combExt \ni y \to x} {(\EuHp \kt_j(y) - \EuHp k_j(y))} =0
\quad \text{for all } x \in \bdyP \comb.
\end{equation}

If $x_0 \ne 0$, then
\begin{align}
    & \lim_{\combExt \ni y \to x_0} {(\EuHp \kt_j(y) - \EuHp k_j(y))} 
    \label{eq-ktj-1}
\\
    & \kern 2em =
    \lim_{\combExt \ni y \to x_0} {(\EuHp \kt_j(y) - U_j(y))}
   -    \lim_{\combExt \ni y \to x_0} {(\EuHp k_j(y) - U_j(y))}
   =0,\kern 3em \nonumber
\end{align}
by Theorem~5.2 in Bj\"orn~\cite{ABjump} and
Remark~\ref{rmk-mods}
(applied to both $\kt_j$ and $k_j$),
where $U_j$ is the function called $U$ in Theorem~5.2 in \cite{ABjump}
(translated to $x_0$).
Note that the same function $U_j$ applies to both $\kt_j$ and $k_j$.

Theorem~5.4 in \cite{ABjump} 
and Remark~\ref{rmk-mods}
(applied to both $\kt_j$ and $k_j$)
yield
\[
       \lim_{\combExt \ni y \to y_j} \EuHp \kt_j(y) 
       = k_j(y_j) =\lim_{\combExt \ni y \to y_j} \EuHp k_j(y),
	\quad \text{if } y_j \notin \{x_0,0\}.
\]
Moreover, 
if $x \in \bdyP \comb \setm \{x_0,y_j\}$ or if $x=x_0=0 \ne y_j$,
then
$k_j$ and $\kt_j$ are continuous at $x$, and thus,
by Proposition~\ref{prop-Extreg-gen},
\begin{equation}
    \label{eq-ktj-2}
       \lim_{\combExt \ni y \to x} \EuHp \kt_j(y) 
	= \kt_j(x)= k_j(x)
       =\lim_{\combExt \ni y \to x} \EuHp k_j(y). 
\end{equation}

It remains to handle the case when $x=y_j=0$
for which we will use the auxiliary function 
$k'_j =k_j + 2\chi_{\{0\}}$. 
Let $u' \in \UU_{k'_j}$. 
Then 
\[
\liminf_{\combExt \ni y \to x} (u'(y) - \EuHp \kt_j(y)) 
  \ge \lim_{\combExt \ni y \to x} {(\EuHp k'_j(y) - \EuHp \kt_j(y))} =0
\]
for all $x \in \bdyP \comb \setm \{0\}$,
where
the equality is obtained as in \eqref{eq-ktj-1} and \eqref{eq-ktj-2}.
Also
\[
     \liminf_{\comb \ni y \to 0} u'(y) \ge k'_j(0),
\]
while 
\[
     \limsup_{\comb \ni y \to 0} \EuHp \kt_j(y) \le \kt_j(0) = k'_j(0),
\]
by Proposition~\ref{prop-Extreg-gen} (applied to $-\kt_j$)
since $\kt_j$ is upper semicontinuous at $0$.
By the comparison principle (Theorem~\ref{thm-comp}), we
see that 
$u' \ge \EuHp \kt_j$,
and since this holds for all $u'  \in \UU_{k'_j}$,
we obtain that $\EuHp k'_j \ge \EuHp \kt_j$.
The converse inequality holds by simple comparison, and hence
$\EuHp k'_j = \EuHp \kt_j$.
Theorem~5.4 in \cite{ABjump} 
and Remark~\ref{rmk-mods} again
(this time applied to $k'_j$ and $k_j$)
yield
\[
       \lim_{\comb \ni y \to 0} \EuHp \kt_j(y) 
   =  \lim_{\comb \ni y \to 0} \EuHp k'_j(y) 
       = k_j(0) =\lim_{\comb \ni y \to 0} \EuHp k_j(y),
\]
which finally shows 
\eqref{eq-kt-k} for all $x \in \bdy_p \comb$ regardless
of the values of $x_0$ and $y_j$.
We thus conclude that 
$\EuHp \kt_j \equiv \EuHp k_j$, 
by 
the comparison principle (Theorem~\ref{thm-comp}).

Therefore, we can find $u \in \UU_{\kt_j}$ such that
\[
    u(z_0) < \EuHp \kt_j(z_0) +\frac{\eps}{2^j}
    = \EuHp k_j(z_0) +\frac{\eps}{2^j}.
\]
Extend $u$ to $\bdyExt \comb$ by letting
\[
     u(x)=\liminf_{\combExt \ni y \to x} u(y),
	\quad x \in \bdyExt \comb.
\]
Then $u$ is lower semicontinuous on $\overline{\combExt}$
and $u \ge \kt_j$ on $\bdyExt \comb$.

As 
$u$ is lower semicontinuous,
$k_j$  upper semicontinuous,
$u\ge \kt_j=k_j+ 2\chi_{E_j}$,
and $E_j$ is  compact,
there is $r>0$
such that
\[
          u(x) > k_j(x)+1 
	\quad \text{if } x \in \bdyExt \comb
        \text{ and } \dist_{\rm Ext}(x,E_j) <  r.
\]
If $y_j \ne 0$, then we moreover require that $r < \dist_{\rm Ext}(y_j,I)$.
Let 
\[
    k_{j+1}(x)=k_j(x) +\biggl(1-\frac{\dist_{\rm Ext}(x,E_j)}{r}\biggr)_\limplus,
		\quad x \in \bdyExt \comb.
\]
Then $u  \ge k_{j+1}$ on $\bdyExt \comb$.
Hence $u \in \UU_{k_{j+1}}$ and
\[
	\EuHp k_{j+1}(z_0) \le u(z_0) < 
            \EuHp k_j(z_0) +\frac{\eps}{2^j}.
\]
That the other requirements on $k_{j+1}$ are fulfilled is clear.
We have therefore completed the construction of the sequence
$\{k_j\}_{j=0}^\infty$.

It follows directly that $\{\EuHp k_j\}_{j=0}^\infty$
is an increasing sequence of \p-harmonic functions in $\comb$.
Let $v=\lim_{j \to \infty} \EuHp k_j$.
Since
\[
    \EuHp k_j(z_0) < \EuHp k(z_0) + \eps \sum_{k=0}^{j-1} 2^{-j}
	         < \EuHp k(z_0) + 2\eps,
\]
we see that 
$
    v(z_0) \le \EuHp k(z_0) + 2\eps < \infty.
$
Harnack's convergence theorem
(see Theorem~6.14 in
Heinonen--\allowbreak Kilpel\"ainen--Martio~\cite{HeKiMa})
shows that $v$ is \p-harmonic in $\comb$.
We next want to show that $v \in \UU_{h}$.
For $x \in \bdyP \comb \setm \Et$
we have, by Proposition~\ref{prop-Extreg-gen}, that
\[
    \liminf_{\combExt \ni y \to x} v(y)   
    \ge \liminf_{\combExt \ni y \to x} \EuHp k(y)
    = k(x) \ge f(x)=h(x).   
\]
On the other hand, if $x \in I \cup \Et  \setm \{x_0\}$,
then $k_j$ is lower semicontinuous at $x$, 
and thus, by Proposition~\ref{prop-Extreg-gen},
\[
    \liminf_{\combExt \ni y \to x} v(y)   
    \ge \lim_{j \to \infty} \liminf_{\combExt \ni y \to x} \EuHp k_j(y)
    \ge  \lim_{j \to \infty}  k_j(x)
    = \infty.   
\]
Since $x_0$ is $\Ext$-regular, by Proposition~\ref{prop-Extreg},
and $k_j -k \in C(\bdyExt \comb)$
we see that
\[
    \liminf_{\combExt \ni y \to x_0} \EuHp k_j(y)
         \ge  \liminf_{\combExt \ni y \to x_0} \EuHp (k_j-k)(y) 
    =   (k_j(x_0)-k(x_0)).
\]
Hence
\begin{align*}
    \liminf_{\combExt \ni y \to x_0} v(y)   
     \ge \lim_{j \to \infty} \liminf_{\combExt \ni y \to x_0} \EuHp k_j(y)
    \ge  \lim_{j \to \infty}  (k_j(x_0)-k(x_0))
    = \infty.   
\end{align*}
Thus $v \in \UU_{h}$,
and in particular
\[
    \EuHp h(z_0) \le v(z_0) \le \EuHp k(z_0) + 2\eps.
\]
Letting $\eps \to 0$ shows that $\EuHp h(z_0) \le \EuHp k(z_0)$,
and as $z_0 \in \comb$ was arbitrary we find that
$     \EuHp h \le \EuHp k $ 
in $\comb$.
It follows that 
\[ 
      \limsup_{\combExt \ni z \to x_0} {(\EuHp h(z) - U(z))} 
    \le     \limsup_{\combExt \ni z \to x_0} {(\EuHp k(z) - U(z))}
      =0,
\] 
by either Theorem~5.2 in Bj\"orn~\cite{ABjump} (if $x_0 \ne 0$) 
or Theorem~5.4 in \cite{ABjump} (if $x_0=0$).
Applying this also to $-h$ 
and using that $\ElHp h \le \EuHp h$
give \eqref{eq-lim-gen-p>2} and
complete the proof.
\end{proof}

\begin{proof}[Proof of Theorem~\ref{thm-comb-last}]
Without loss of generality we may assume that
$0 \le f \le 2$  and that $f(0)=1$.
Let 
\[
    k=\begin{cases}
        f & \text{on } \bdyP \comb, \\
        1 & \text{on } I,
      \end{cases}
      \quad \text{and} \quad
    \ktilde=k+\chi_{\{0\}}.
\]

Fix $x \in \bdyP \comb \setm \{0\}$ for the moment.
We first observe that it follows, 
from 
either Lemma~\ref{lem-key} (if $p >2$)
or Theorem~5.2 in Bj\"orn~\cite{ABjump} and Remark~\ref{rmk-mods}
(if $p \le 2$),
that  there is
a function $U_{x}: \comb \to \R$
 such that
\begin{align}
  \label{eq-f-diff}
  \lim_{\combExt \ni y \to x} (\EuHp f(y) - U_x(y)) &  =   
    \lim_{\combExt \ni y \to x} (\ElHp f(y) - U_x(y)) = 0, \\
  \label{eq-k-diff}
   \lim_{\combExt \ni y \to x} (\EuHp k(y) - U_x(y)) & = 
   \lim_{\combExt \ni y \to x} (\ElHp k(y) - U_x(y))   =0.
\end{align}
We also need that 
\begin{align}
  \label{eq-h-diff}
   \lim_{\combExt \ni y \to x} (\EuHp h(y) - U_x(y)) & =
   \lim_{\combExt \ni y \to x} (\ElHp h(y) - U_x(y)) = 0,
\end{align}
which again follows from Lemma~\ref{lem-key} if $p >2$.

To establish \eqref{eq-h-diff} for $p\le 2$ we proceed as follows:
Let $m$ be a positive integer such that $2^{1-m} < |x|$.
Let also $G=((-1,1)  \times (0,2)) \setm \bigcup_{j=0}^m \itoverline{I}_j$,
and let $\combG$ and $\bdy_{\itoverline{G}^M} \comb$
be as in the proof of the general case of Theorem~\ref{thm-main}.
We can then find a function $w: \bdy_{\itoverline{G}^M} \comb \to \R$
such that $w \ge f$ on $\bdyExt \comb$, $w$ 
is continuous at all points in $\bdy_{\itoverline{G}^M} \comb  \setm \{x\}$, 
and  $w-f$ is continuous at $x$.
Furthermore, let $E' \subset \bdy_{\itoverline{G}^M} \comb$ be the set corresponding
to $\Et$ and $\wt =w + \infty \chi_{I \cup E'}$.
Next, we need to apply Theorem~7.2 in Bj\"orn~\cite{ABjump} to the
function $\wt \ge h$, but
with respect to $\combG$.
The proof therein applies also in this case with the following remarks:
\begin{enumerate}
\item
The use of Theorem~5.2 in \cite{ABjump} is valid
also in our case, see the discussion in Remark~\ref{rmk-mods}.
\item
Instead of appealing to Theorem~2.4 in \cite{ABjump}
(which is Theorem~6.1 in Bj\"orn--Bj\"orn--Shanmugalingam~\cite{BBS2})
we need to use Theorem~11.2 
in Bj\"orn--Bj\"orn--Shanmugalingam~\cite{BBSdir}
and the fact that $\bCp(E'\cup I,\combG)=0$ (which follows from 
\eqref{eq-bCpI} and 
\eqref{eq-CpEt}),
where $\bCp$
is the new capacity introduced in \cite{BBSdir}.
\item
The proof in \cite{ABjump} is not valid for $p=2$, but
in this case the result follows more easily using linearity.
(When $p=2$ the entire Theorem~\ref{thm-comb-last} can also be 
deduced more easily using linearity.)
\end{enumerate}
We thus obtain that
\[
   \lim_{\combG \ni y \to x} (\uHpind{\combG} \wt(y) - U_x(y))  =0.
\]
By simple comparison we have $\EuHp h \le \EuHp \wt = \uHpind{\combG} \wt$,
and thus
\[
   \limsup_{\combExt \ni y \to x} (\EuHp h(y) - U_x(y)) 
   \le 0.
\]
Applying this also to $-h$ and using that $\ElHp h \le \EuHp h$ establishes
\eqref{eq-h-diff} for $p \le 2$, i.e.\ for all $p$.

Let  next $\phi:=2+\infty \chi_{I \cup \Phi(\Et)}$.
(Note that $\phi$ is a function on $\bdy \comb$.)
Then $\phi \ge h$ on $\Extbdy \comb$.
By either Theorem~9.1 
in Bj\"orn--Bj\"orn--Shanmugalingam~\cite{BBSdir}  and \eqref{eq-bCpI}
(if $p \le 2$)
or the comparison principle (Theorem~\ref{thm-comp}) 
and Lemma~\ref{lem-key} (if $p >2$), 
$\EHp \phi \equiv \Hp \phi \equiv 2$.
Let $u \in \UUt_{\ktilde}$.
Then, by simple comparison
\[
      \liminf_{\comb \ni y \to 0} u(y) \ge \ktilde(0)=2
      = \limsup_{\comb \ni y \to 0} \EHp \phi(y) 
      \ge \limsup_{\comb \ni y \to 0} \EuHp h(y). 
\]
Moreover, for $x \in \bdyP \comb \setm \{0\}$,
\[
         \liminf_{\combExt \ni y \to x} (u (y) - \EuHp h(y))
        \ge  \lim_{\combExt \ni y \to x} (\EuHp k(y) - \EuHp h(y))
        =0,
\]
by \eqref{eq-k-diff} and \eqref{eq-h-diff}.
Thus,  the comparison principle (Theorem~\ref{thm-comp}) yields that
$u \ge \EuHp h$.
Since $u \in \UUt_{\ktilde}$ was arbitrary, we obtain 
that $\EuHp \ktilde \ge \EuHp h$.

As $k$ is continuous at $0$ there is $\psi \in C(\bdy \comb)$
such that $\psi \ge k$ on $\bdyExt \comb$ and $\psi(0)=1$.
(Note that $\psi$ is a function on $\bdy \comb$.)
Let also $\psit=\psi +\chi_{\{0\}}$ so that $\psit \ge \kt$ on $\bdyExt \comb$.
Then $\Hp \psit = \Hp \psi$ by 
either Theorem~6.3 in Bj\"orn~\cite{ABjump} (if $p>2$, note
that we apply it to normal Perron solutions)
or 
Theorem~6.1 in Bj\"orn--Bj\"orn--Shanmugalingam~\cite{BBS2} (if $p \le 2$)
(which can also be found as Theorem~10.29 in \cite{BBbook};
the more general  Theorem~9.1 in 
Bj\"orn--Bj\"orn--Shanmugalingam~\cite{BBSdir} can also be used).
We conclude, using also simple comparison, that
\[
     \EuHp h \le \EuHp \kt \le \Hp \psit = \Hp \psi.
\]
Hence 
\[
      \limsup_{\comb \ni y \to 0} \EuHp h(y) 
      \le \lim_{\comb \ni y \to 0} \Hp \psi(y) = 1,
\]
where the last equality holds because $0$ is regular.
Applying this to $2-h$ shows that we also have
\[
      \liminf_{\comb \ni y \to 0} \ElHp h(y) 
      \ge 1,
\]
which together with the inequality $\ElHp h \le \EuHp h$ gives that 
\[
      \lim_{\comb \ni y \to 0} \EuHp h(y) 
      =\lim_{\comb \ni y \to 0} \ElHp h(y) =1.
\]
In particular this holds when $h=f$.

For $x \in \bdyP \comb \setm \{0\}$,
we get from  \eqref{eq-f-diff} and \eqref{eq-h-diff}
that
\begin{align*}
         \lim_{\bdyExt\comb \ni y \to x} (\EuHp h(y) - \EuHp f(y)) 
  &      =         \lim_{\bdyExt\comb \ni y \to x} (\ElHp h(y) - \EuHp f(y)) \\
  &      =    \lim_{\bdyExt\comb \ni y \to x} (\ElHp h(y) - \ElHp f(y))
        =0.
\end{align*}
Thus,  the comparison principle (Theorem~\ref{thm-comp}) yields that
$\EuHp h \equiv \ElHp h \equiv \EuHp f \equiv \ElHp f$.
The inequalities in  \eqref{eq-fund} complete the proof.
\end{proof}


\begin{thebibliography}{99}

\bibitem{ABBSprime} \art{Adamowicz, T.,
        Bj\"orn, A., Bj\"orn, J. \AND Shan\-mu\-ga\-lin\-gam, N.}
        {Prime ends for domains in metric spaces}
        {Adv. Math.} {238} {2013} {459--505}

\bibitem{ABclass} \art{\idxauth{Bj\"orn}{A}}
         {A regularity classification of boundary points
           for \p-harmonic functions and quasiminimizers}
        {J. Math. Anal. Appl.} {338} {2008} {39--47}


\bibitem{ABjump} \art{Bj\"orn, A.}
        {\p-harmonic functions with boundary data having jump discontinuities
	and Baernstein's problem}
        {J. Differential Equations} {249} {2010} {1--36}

\bibitem{BB2} \art{\idxauth{Bj\"orn}{A} \AND \idxauth{Bj\"orn}{J}}
	{Approximations by regular sets and Wiener solutions in metric spaces}
	{Comment. Math. Univ. Carolin.} {48} {2007} {343--355}

\bibitem{BBbook} \book{Bj\"orn, A. \AND Bj\"orn, J.}
        {\it Nonlinear Potential Theory on Metric Spaces}
    {EMS Tracts in Mathematics {\bf 17},
        European Math. Soc., Zurich, 2011}

\bibitem{BBS} \art{\idxauth{Bj\"orn}{A}, \idxauth{Bj\"orn}{J},
	\AND \idxauth{Shanmugalingam}{N}}
        {The Dirichlet problem for \p-harmonic functions on metric spaces}
        {J. Reine Angew. Math.} {556} {2003} {173--203}

\bibitem{BBS2} \art{Bj\"orn, A., Bj\"orn, J. \AND Shanmugalingam, N.}
        {The Perron method for \p-harmonic functions}
        {J. Differential Equations} {195} {2003} {398--429}

\bibitem{BBS3} \art{\idxauth{Bj\"orn}{A}, \idxauth{Bj\"orn}{J}
	\AND \idxauth{Shanmugalingam}{N}}
        {A problem of
	Baernstein on the equality of the \p-harmonic
	measure of a set and its closure}
        {Proc. Amer. Math. Soc.} {134} {2006} {509--519}

\bibitem{BBSdir} {\sc Bj\"orn, A., Bj\"orn, J. \AND Shanmugalingam, N.},
    {The Dirichlet problem for \p-harmonic functions with respect to
the Mazurkiewicz boundary, and new capacities},
     {\it Preprint}, 2013, {\tt arXiv:1302.3887}.

\bibitem{BBStop} \artprep{Bj\"orn, A., Bj\"orn, J. 
        \AND Shan\-mu\-ga\-lin\-gam, N.}
        {The Mazurkiewicz distance and sets which
          are finitely connected at the boundary}
        {\it In preparation}

\bibitem{car}  \art{Carath\'eodory, C.}
       {\"Uber die Begrenzung einfach zusammenh\"angender Gebiete}
       {Math. Ann.} {73} {1913} {323--370}

\bibitem{ES} \artprep{Estep, D. \AND Shanmugalingam, N.}
        {The topology of the prime end boundary and the Perron method for the 
         Dirichlet problem in metric measure spaces}
        {\it In preparation}

\bibitem{GrLiMa86} \art{Granlund, S.,  Lindqvist, P. \AND  Martio, O.}
        {Note on the PWB-method in the nonlinear case}
        {Pacific J. Math.} {125} {1986} {381--395}

\bibitem{HeKiMa} \book{Heinonen, J., Kilpel\"ainen, T.\ \AND Martio, O.}
        {Nonlinear Potential Theory of Degenerate Elliptic Equations}
        {2nd ed., Dover, Mineola, NY, 2006}

\bibitem{Kilp89} \art{Kilpel\"ainen, T.}
           {Potential theory for supersolutions of degenerate elliptic equations}
           {Indiana Univ. Math. J.} {38} {1989} {253--275}

\bibitem{mazya70}
     \artnopt{\auth{Maz{\cprime}ya}{V. G}}
        {On the continuity at a boundary point of solutions of quasi-linear
        elliptic equations}
        {Vestnik Leningrad. Univ. Mat. Mekh. Astronom.}
        {25{\rm:13}} {1970} {42--55}  (Russian).
        English transl.: {\it Vestnik Leningrad Univ. Math.}
        {\bf 3} (1976), 225--242.

\bibitem{perron} \art{\idxauth{Perron}{O}}
         {Eine neue Behandlung der ersten Randwertaufgabe f\"ur
           $\Delta u =0$}
         {Math. Z.} {18} {1923} {42--54}

\bibitem{remak} \art{\idxauth{Remak}{R}}
	{\"Uber potentialkonvexe Funktionen}
	{Math. Z.} {20} {1924} {126--130}

\end{thebibliography}
\end{document}